\newtheorem{thm}{THEOREM}[section]
\newtheorem{cor}[thm]{COROLLARY}
\newtheorem{defn}[thm]{DEFINITION}
\newtheorem{ex}[thm]{EXAMPLE}
\newtheorem{lemma}[thm]{LEMMA}
\newtheorem{prop}[thm]{PROPOSITION}
\newtheorem{remark}[thm]{REMARK}
\newcommand{\e}{{\epsilon}}
\newcommand{\G}{{\Gamma}}
\newcommand{\mN}{{\mathbb N}}
\newcommand{\mQ}{{\mathbb Q}}
\newcommand{\mR}{{\mathbb R}}
\newcommand{\mX}{{\mathbb X}}
\newcommand{\mZ}{{\mathbb Z}}
\newcommand{\cC}{{\mathcal C}}
\newcommand{\cM}{{\mathcal M}}
\newcommand{\cN}{{\mathcal N}}
\newcommand{\cT}{{\mathcal T}}
\newcommand{\fM}{{\mathfrak{M}}}
\newcommand{\fX}{{\mathfrak{X}}}
\newcommand{\phom}{\mbox{{\rm Phom}}}
\begin{document}

\title{The homology core of matchbox manifolds and invariant measures}

\keywords{homology, matchbox manifold, invariant measure, aperiodic order}

\thanks{2010 {\it Mathematics Subject Classification}. Primary 37C85; Secondary 28D15, 52C23, 37C70 }

\author{Alex Clark}
\thanks{IN-2013-045, Leverhulme Trust for International Network}
\address{Alex Clark, Department of Mathematics, University of Leicester, University Road, Leicester LE1 7RH, United Kingdom}
\email{Alex.Clark@le.ac.uk}

\author{John  Hunton}
\address{John  Hunton, Department of Mathematical Sciences, Durham University, South Road, Durham DH1 3LE, United Kingdom}
\email{john.hunton@durham.ac.uk}


\date{}

\maketitle
\begin{abstract}
 Here we consider the topology and dynamics associated to a wide class of matchbox manifolds, including spaces of aperiodic tilings and all minimal matchbox manifolds of dimension one. For such a space we introduce a topological invariant, the homology core, built using an expansion of it as an inverse sequence of simplicial complexes. The invariant takes the form of a monoid equipped with a representation, which in many cases can be used to obtain a finer classification than is possible with the previously developed invariants. When the space is obtained by suspending a topologically transitive action of the fundamental group $\G$ of a closed orientable manifold on a zero--dimensional compact space $Z$, this invariant corresponds to the space of finite Borel measures on $Z$ which are invariant under the action of $\G$. This leads to connections between the rank of the core and the number of invariant, ergodic Borel probability measures for such actions. We illustrate with several examples how these invariants can be calculated and used for topological classification and how it leads to an understanding of the invariant measures.
\end{abstract}

\section{Introduction}
Given the action of the fundamental group $\Gamma=\pi_1(M)$ of a closed orientable manifold $M$ on the zero--dimensional compact metric space $Z,$ one can suspend the $\G$ action over $M$ to form a space $\cM$. Provided this $\G$ action has a dense orbit (i.e., is topologically transitive), this space will have the structure of a {\em matchbox manifold}. More generally, a matchbox manifold is a compact, connected metric space that locally has the structure of $\mR^d\times \cT,$ where $\cT$ is a zero--dimensional space.  Such spaces occur naturally when considering minimal sets of foliations and hyperbolic attractors of diffeomorphisms of manifolds. We do not in general here require any differentiable structure, but shall restrict our consideration to matchbox manifolds that admit an expansion as an inverse system (tower) of finite simplicial complexes with well-behaved projection and bonding maps; such expansions are known to exist for many classes, see for example, \cite{CHL2014}. We detail the precise class of spaces we consider in Section \ref{back}.

Special and well studied examples of such objects include the so called tiling spaces arising from aperiodic tilings of a Euclidean space with finite (translational) local complexity; see Sadun's text \cite{S} for a general introduction to such examples. These tiling spaces can be viewed as the suspension over a torus of a $\mZ^d$ action on a zero--dimensional space, as detailed by Sadun \& Williams \cite{SW2003}.

In this paper we introduce a homeomorphism invariant of oriented matchbox manifolds, the {\em homology core\/} of $\cM$. In its strongest form, the core may be considered as a monoid equipped with a representation in a linear space, and it is this representation (as opposed to an abstract monoid) that distinguishes the homology core from earlier invariants constructed from the ordered cohomology. Our construction uses the top dimensional homology groups of an expansion of $\cM$, and as such the invariance of the homology core can be considered a generalisation of the work of Barge \& Diamond \cite{BDFund} and Swanson \& Volkmer \cite{SV2000} on the weak equivalence of matrices related to a one dimensional substitution tiling system; we have however no reason to restrict attention to substitution tiling spaces, and we can often compute our invariant for oriented matchbox manifolds of any dimension to great advantage.

Our work may also be seen as providing a generalisation of invariants as introduced by Kellendonk \cite{K} and Ormes, Radin \& Sadun \cite{ORS} which use oriented dimension or cohomology groups, applied to higher dimensional substitution tiling spaces; our use of homology however gives, even in the substitution tiling case, a richer invariant, yielding information distinguishing finer classifications than the earlier work by including the representation as part of the invariant. By using the inverse sequences that are dual to the sequences considered in ordered cohomology, many of the calculations are transparent that are not in the original setting. In particular, the homology core topologically distinguishes many spaces that have isomorphic cohomology and  captures some information related to the relative frequencies of cycles.

While the homology core is constructed using expansions, as is usual in shape theory, the homology core is not a shape invariant. In fact, we show that (unlike Cech cohomology) the homology core can be used to distinguish  examples of shape equivalent spaces, but at the same time, there are examples of spaces the homology core does not distinguish but which are distinguished by the authors' shape invariant $L_1$  defined in \cite{ClarkHunton}.

An intriguing and significant feature of the homology core we present appears when our underlying matchbox manifold $\cM$ is constructed by suspending a topologically transitive $\G$ action on a zero--dimensional compact space $Z$ over the oriented manifold $M$. We show under these circumstances that the top Cech cohomology of such a matchbox manifold is tractable, and as a result in many natural cases the homology core can be identified with the space of finite Borel measures on the space $Z$ that are invariant under the given action of $\G$. A related result for tiling spaces is  given by Bellissard, Benedetti \& Gambaudo \cite{BBG2006}.

There is a connection with objects that have been previously used in the study of invariant measures in, for example, Bezuglyi, Kwiatkowski, Medynets, \& Solomyak \cite{BKMS2013}, Aliste-Prieto \& D. Coronel \cite{APC2011}, Petite \cite{Pet} and Frank \& Sadun \cite{FS}. In those constructions the number of invariant, ergodic, Borel probability measures is usually found to be bounded above by the number of tile types, vertices in a related Bratteli diagram or similar information.  From our viewpoint, in many cases we can directly compute the number of invariant, ergodic Borel measures in terms of extreme points in our potentially much smaller homology core of $\cM$.

Furthermore, our result can be viewed as a refinement of the connection between the foliation cycles of a foliated space and the space of invariant measures discovered by Sullivan \cite{Sull1976}, see also \cite{MS2006}. The advantage of our approach is that one can calculate the homology core in a direct and quite tangible way, capturing some of the geometric information lost in the other approaches.

A novel feature of our approach is that it is purely topological and makes no use of a smooth structure as in \cite{Sull1976}, \cite{BBG2006}. At the same time, by considering the more general case we identify the key ingredients in the structure that make the argument go through. In particular, the fibred simplicial presentations as found in \cite{CHL2014} are essential in the arguments of Theorem~\ref{invariantsandcone}. Interestingly, these presentations are only known to exist in the case that $\G$ acts in a special way: if $\gamma \in \G$ fixes an element of the transverse space $z\in Z,$ then $\gamma$ must fix a neighbourhood of $z,$ thus highlighting some properties of the group action that appear necessary for the connection between the $\G$--invariant measures and the topologically invariant homology core. Since there are examples of group actions that do not admit non--trivial invariant Borel measures, this highlights a potential topological obstruction to the existence of such measures.

The paper is arranged as follows. In Sections 2 and 3 we specify the category of matchbox manifolds we consider, associated homology classes and their behaviour under homeomorphisms. In Section 4 we define the homology core, Definition \ref{core-def}, and prove its invariance under homeomorphism. We also introduce the properties of $\mZ$ and $\mQ$-stability. In Section 5 we concentrate on those matchbox manifolds that are suspensions over manifolds, and relate the homology core to spaces of invariant measures. In the final section, 6, we detail a number of examples and computations. We recover and generalise, Theorem \ref{Solenoid}, a result of Cortez \& Petite \cite{CortPet} on the unique ergodicity of certain solenoids and their associated odometers, provide examples, \ref{gencf}, for which the homology core can be used to identify a natural class of spaces that are not uniquely ergodic but for which the homology core can be calculated, examples, \ref{sub}, in which the core distinguishes between spaces with the same ordered cohomology, examples, \ref{cf}, which demonstrate that the core can distinguish shape equivalent spaces, but also, \ref{shape}, that the core will not fully distinguish between all shape inequivalent spaces.

\section{Background}\label{back}

In this section we shall present  the preliminary results that allow us to obtain the topological invariance of the homology core we construct in Section \ref{core}. We begin by recalling the suspension construction.

Let $\Gamma=\pi_1(M,m_0)$ be the fundamental group of a  $\mathrm{PL}$ closed orientable manifold $M$ of dimension $d.$ Let $\G$ act on the left of the zero--dimensional compact space $Z.$ We identify $\G$ with the deck transformations of the universal covering map $\widetilde{M} \to M $, and we consider $\G$ to act on the right of $\widetilde{M}.$ This then leads to the \emph{suspension} $\widetilde{M}\times_{\G}Z$, which is the orbit space of the action of $\G$  on  $\widetilde{M}\times Z$ given by

\[
(\gamma, (m,c) )\mapsto (m \cdot \gamma^{-1}, \gamma\cdot c).
\]

The space $\cM : = \widetilde{M}\times_{\pi_1(M)}Z$ thus constructed is a foliated space which is locally homeomorphic to $\mR^d\times Z.$ Provided that the action of $\G$ is topologically transitive, $\cM$ is connected and thus an example of a matchbox manifold.

\begin{defn} \label{def-mm}
A \emph{matchbox manifold}  $\cM$  is a compact, connected metric space with the structure of a
foliated space, such that for each $x \in \cM$, the
transverse model space $\cT_x$ is totally disconnected.
\end{defn}

The topological dimension of a matchbox manifold of dimension $d$ is the same as the dimension of its leaves, which coincide with the path components. In the case of a suspension over a manifold $M,$  $d$ coincides with the dimension of $M.$ The smoothness of a suspension $\widetilde{M}\times_{\pi_1(M)}Z$ along leaves in the case that $M$ is smooth and its structure as a fiber bundle over $M$ with fiber $Z$ follow from general considerations, see \cite[Chapt 3.1]{CandelConlon2000}. A matchbox manifold is \textit{minimal} when each path component is dense. A suspension $\widetilde{M}\times_{\pi_1(M)}Z$ is minimal if the action of $\G$ on $Z$ is minimal. We refer the reader to \cite{ClarkHurder2010}, \cite{CHL2014} for a more detailed discussion.

\begin{defn}
Let  $\cM$ be a matchbox manifold of dimension $d$. A \emph{simplicial presentation} of $\cM$ is an inverse sequence whose limit is homeomorphic to $\cM$
 \[
 \begin{array}{lcl}
 \cM&\approx & \underleftarrow{\lim}\{M \xleftarrow{f_1}X_2 \xleftarrow{f_2} X_3 \xleftarrow{f_3} \cdots\}
\end{array}
 \]
and is such that each $X_n$ is a triangulated space and each bonding map  $f_n$ is surjective and simplicial. Additionally, we require for each $n$ that:
\begin{itemize}
 \item[(i)] each simplex in the triangulation of $X_n$ is a face of a $d$--dimensional simplex and
 \item [(ii)] each  $d$--dimensional simplex $S$ of $X_n$ pulls back in $\cM$ to a subset homeomorphic to $S\times K$ for some zero--dimensional compact $K$ and that for each $k\in K$ the restriction of the projection $\cM \to X_n$ to $S\times \{k\}$ is a homeomorphism onto its image.
 \end{itemize}
\end{defn}

Condition (ii) is similar to requiring the restrictions to leaves to be covering maps (as is the case of the fiber bundle projections in a suspension), only at the boundaries of the simplices $S$ in the leaves (where there can be branching in $X_n$) the projections do not necessarily behave as covering maps. In addition to very general tiling spaces, according to the results of \cite{CHL2014}, a wide variety of minimal matchbox manifolds admit such a presentation.

According to the definition, corresponding to the triangulation of $X_n$ in a simplicial presentation of $\cM$ admits, there is  a decomposition of $\cM$ into a finite number of sets of the form $S_i \times K_i$ that intersect only along sets of the form $\partial S_i \times K,$ where $\partial S_i$ is the boundary of $S_i$ and $K$ is a clopen subset of $K_i.$ Thus, the leaves of $\cM$ can be given a simplicial structure induced by this decomposition.  What is more, the leaves of $\cM$ can be considered as being tiled by finitely many tile types, one type corresponding to each simplex $S_i$ in the triangulation of $X_n.$ Given the nature of a triangulation, we also have that there are only finitely many ways that tiles may intersect in a leaf, which can be considered as a form of what is known as finite local complexity. Each of the successive approximating spaces $X_n$ leads to a finer decomposition of $\cM$ and the fibers of the projection $\cM \to X_{n+1}$ are contained in the fibers of the projection $\cM \to X_{n}$ and the induced map $f_n:X_{n+1}\to X_n$ is simplicial in that it can be considered as the geometric realisation of a simplicial map of the complexes underlying the triangulations of $X_{n+1}$ and $X_n.$

This special structure will allow us to apply a powerful result on the approximation of maps between inverse limits as described below.

\begin{defn}
For given inverse limits $\cM = \displaystyle\,{\lim_{\longleftarrow}} \{X_n,f_n\}$ and $\cN = \displaystyle{\lim_{\longleftarrow}} \{Y_n,g_n\}$, a map  $h:\cM \to \cN$ is said to be \emph{induced} if for a subsequence $n_i$ of $\mN,$ there is for each $i\in \mN$  a map $h_i: X_{n_i} \to Y_i$ such that the following diagram commutes
\[
\xymatrix{
X_{n_1} \ar[d]_{h_1}
& X_{n_2} \ar[l]_{f^{n_2}_{n_1}}\ar[d]^{h_2}
& \cdots\ar[l]
& X_{n_k}\ar[d]^{h_k}\ar[l]
& X_{n_{k+1}}\ar[l]_{f^{n_{k+1}}_{n_k}}\ar[d]^{h_{k+1}}
& \cdots\ar[l]\\
Y_1
& Y_2\ar[l]^{g_1}
&\cdots\ar[l]
& Y_k\ar[l]
& Y_{k+1}\ar[l]^{g_k}
& \cdots\ar[l]}
\]
and the resulting map $\cM \to \cN$ given by $(x_i) \mapsto (h_i(x_{n_i}))$ is equal to $h.$
\end{defn}
In the above, for $k<\ell,$ $f^\ell_k=f_k\circ \cdots \circ f_{\ell -1}.$

We record the key result of Rogers \cite[Thm 4]{R1973} on the approximation of maps between inverse limits as maps between the factor spaces.
\begin{thm}\cite{R1973}\label{Rogers}
Given two matchbox manifolds  with simplicial presentations $\cM = \displaystyle\,{\lim_{\longleftarrow}} \{X_n,f_n\}$ and $\cN = \displaystyle{\lim_{\longleftarrow}} \{Y_n,g_n\}$ and given any $\e>0$, any continuous map $f:\cM\to \cN$ is homotopic to an induced map $f_\e$ in which points are moved no more than $\e$ over the course of the homotopy from $f$ to $f_\e.$
\end{thm}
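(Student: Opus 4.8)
\medskip
\noindent\textbf{Proof proposal.}
The plan is to reprove, with displacement control, the standard fact that an inverse sequence of finite simplicial complexes exhibits its limit as a polyhedral \emph{resolution}, so that a map between two such limits is homotopic to one induced by a \emph{level} (ladder) map of the defining systems; the matchbox/simplicial hypotheses enter only in that each $X_n$ and each $Y_i$ is a compact ANR. The key lemma is a factorisation principle with size control: if $P$ is a compact polyhedron and $W=\varprojlim\{W_k,w_k\}$ is the limit of an inverse sequence of compact metric spaces, with projections $\pi_k\colon W\to W_k$, then every map $\varphi\colon W\to P$ is homotopic to a composite $\psi\circ\pi_k$ for some $k$ and some $\psi\colon W_k\to P$, and given $\eta>0$ one may take $k$ so large that the homotopy $\varphi\simeq\psi\circ\pi_k$ moves points of $P$ by less than $\eta$. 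One proves this by realising $P$ as a retract $r\colon U\to P$ of an open set $U\subseteq\mR^N$, approximating (Stone--Weierstrass) the $N$ coordinate functions of $\varphi$ uniformly by functions factoring through a common finite stage $W_k$, to get $\varphi_0\colon W_k\to\mR^N$ with $\varphi_0\circ\pi_k$ close enough to $\varphi$ that the straight--line homotopy stays in $U$, and setting $\psi=r\circ\varphi_0$; uniform continuity of $r$ turns smallness in $\mR^N$ into smallness in $P$. A formal consequence is $[\cM,Y_i]=\varinjlim_n[X_n,Y_i]$ for each compact ANR $Y_i$ (apply the principle to $\cM$ and to $\cM\times[0,1]=\varprojlim(X_n\times[0,1])$). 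Finally, since $\cN$ is compact, all metrics inducing its topology are uniformly equivalent, so it suffices to bound displacements in the metric $d_{\cN}(a,b)=\sum_{i\ge1}2^{-i}d_{Y_i}(q_ia,q_ib)$, where the $q_i\colon\cN\to Y_i$ are the projections and each $d_{Y_i}$ has diameter $\le 1$.

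Fix $\e>0$ and write $p_n\colon\cM\to X_n$ for the projections. The heart of the argument is to build, inductively in $i$, an increasing sequence $n_1<n_2<\cdots$, maps $h_i\colon X_{n_i}\to Y_i$, and mutually compatible homotopies $H_i\colon h_i\circ p_{n_i}\simeq q_i\circ f$ in $Y_i$, each of diameter $<\e/2$, with the ladder commuting \emph{strictly}: $g_i\circ h_{i+1}=h_i\circ f^{n_{i+1}}_{n_i}$. Obtaining $h_i$ with a small $H_i$ is the factorisation principle applied to $q_i\circ f\colon\cM\to Y_i$. Making the ladder commute strictly is the step I expect to be the main obstacle: given $h_i$, the factorisation principle first supplies $h_{i+1}'\colon X_m\to Y_{i+1}$ with $h_{i+1}'\circ p_m\simeq q_{i+1}\circ f$, and then
\[
g_i\circ h_{i+1}'\circ p_m\ \simeq\ g_i\circ q_{i+1}\circ f\ =\ q_i\circ f\ \simeq\ h_i\circ p_{n_i}\ =\ h_i\circ f^m_{n_i}\circ p_m ,
\]
so $g_i\circ h_{i+1}'$ and $h_i\circ f^m_{n_i}$ agree up to homotopy after composition with $p_m$, hence --- by $[\cM,Y_i]=\varinjlim_n[X_n,Y_i]$ --- already after composition with $f^{m'}_m$ for some $m'\ge m$; one must then modify $h_{i+1}'\circ f^{m'}_m$ within its homotopy class until post--composition with $g_i$ is literally $h_i\circ f^{m'}_{n_i}$. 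Strictifying this homotopy--commutative square while simultaneously keeping $H_{i+1}$ of diameter $<\e/2$ is the genuine difficulty: $g_i$ is only a simplicial map, not a fibration --- naively replacing it by the evaluation map of its mapping path space does not return a corrected map into $Y_{i+1}$ --- and, because a simplicial bonding map may expand distances, one cannot simply pull the previously constructed homotopy back. Carrying the straightening out inside the given complexes (after sufficient subdivision, realising the connecting homotopies simplicially, at the cost of a still larger index $n_{i+1}$) is where the simplicial presentations do real work, and this coupled strictification-and-smallness is the content of \cite{R1973}. The same scheme applied to $\cM\times[0,1]$ also makes the $H_i$ mutually compatible.

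Finally I would let $f_\e\colon\cM\to\cN$ be the induced map $x\mapsto\big(h_i(p_{n_i}(x))\big)_i$ read off from this ladder; its $i$-th coordinate is $h_i\circ p_{n_i}$. Assembling the compatible $H_i$ gives a homotopy $\cM\times[0,1]\to\cN=\varprojlim Y_i$ from $f_\e$ to $f$ whose value, at each parameter and each $x$, has for every $i$ its $Y_i$--coordinate within $\e/2$ of $q_i(f(x))$, and hence lies within $\sum_i 2^{-i}(\e/2)<\e$ of $f(x)$ in $\cN$; so every point of $\cM$ is moved a $\cN$--distance less than $\e$. Translating back through the uniform equivalence of metrics (and choosing $\e$ small enough at the start) yields the stated conclusion. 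In short, all the real content sits in the strict--commutativity-with-smallness bookkeeping of the second paragraph; the factorisation principle, the assembly of the homotopy, and the metric estimate are routine.
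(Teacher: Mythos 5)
The first thing to say is that the paper does not prove this statement at all: Theorem \ref{Rogers} is imported verbatim from Rogers \cite[Thm 4]{R1973} and used as a black box, so there is no in-paper argument to compare yours against. Judged on its own terms, your proposal assembles the standard shape-theoretic scaffolding correctly --- the factorisation principle for maps from an inverse limit of compacta into a compact ANR with $\eta$-control on the homotopy, the identification $[\cM,Y_i]=\varinjlim_n[X_n,Y_i]$, the sum metric on $\cN$, and the final assembly and displacement estimate are all fine and genuinely routine.

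But the proposal is not a proof, because it stops exactly at the step that \emph{is} the theorem. The definition of induced map requires the ladder to commute strictly (otherwise $(x_i)\mapsto(h_i(x_{n_i}))$ does not land in $\varprojlim Y_i$), and you yourself observe that $g_i$ is not a fibration, that the connecting homotopy cannot be lifted or pulled back with size control through a distance-expanding simplicial bonding map, and that ``this coupled strictification-and-smallness is the content of \cite{R1973}.'' Deferring the crux to the reference being proved leaves a genuine gap: everything before and after it is formal, and nothing in your sketch actually uses the hypotheses that are supposed to make the strictification possible --- surjectivity and simpliciality of the bonding maps and, in this paper's setting, the fibred condition (ii) on the presentations (which the authors single out in the introduction as essential precisely because it restores a homotopy-lifting-like property over the open simplices). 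To turn this into a proof you would need either to carry out the controlled straightening inside the given complexes (subdividing and realising the correcting homotopies simplicially, with explicit diameter bookkeeping), or to reproduce Rogers' argument; a citation to the result one is asked to prove cannot substitute for that step.
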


\section{Orientation in matchbox manifolds}

We now consider a  matchbox manifold $\cM $  with simplicial presentation $\cM = \displaystyle\,{\lim_{\longleftarrow}} \{X_n,f_n\}$.  Recall that the leaf topology for a leaf $L$ has a basis of  open sets  formed by  intersecting $L$ with open sets of plaques of the foliation charts, which gives $L$ the structure of a connected manifold. A leaf can be orientable or not, and when $L$ is orientable it admits one of two orientations. (A convenient way of considering orientations and orientability for a non--compact manifold admitting a simplicial structure such as $L$ is with the use of homology groups based on infinite chains, see, e.g., \cite[p.33, 388]{Munk}.)  If $L$ is orientable, each time it enters a subset of $\cM$ of the form $S \times K,$ where $S$ is a simplex of dimension $d$ corresponding to a triangulation of some $X_n$, $L$ induces an orientation of $S.$ It can happen that each time an oriented $L$ enters $S \times K$ it induces the same orientation of $S$ or it could induce different orientations.  If $L$ always induces the same orientation on $S,$ we shall say  $L$ induces a \textit{coherent} orientation on $S.$ In a  minimal matchbox manifold $\cM$, whether a given simplex $S$ of $X_n$ is coherently oriented is independent of the choice of orientable leaf $L.$ (It should be borne in mind that for general matchbox manifolds not all leaves of a matchbox manifold need be homeomorphic and that it can even happen that some leaves are orientable while others not.)
\begin{defn}
A simplicial representation of a matchbox manifold $\cM = \displaystyle\,{\lim_{\longleftarrow}} \{X_n,f_n\}$ is \emph{orientable} if the following conditions hold
 \begin{itemize}
   \item[(i)] $\cM$ has an orientable dense leaf $L$ and
   \item[(ii)] $L$ can be oriented coherently with all the simplices occurring in the triangulations of the $X_n.$
\end{itemize}
An \emph{orientation} of an orientable simplicial presentation of $\cM$ is given by a choice of orientation of a dense leaf $L$ as above and the corresponding induced orientation of each simplex occurring in the triangulations of the $X_n.$
\end{defn}
From here we shall only consider orientable presentations. While this originally seems quite restrictive, any matchbox manifold has a an orientable double ``cover", \cite[p. 280]{CandelConlon2000}. Also, the leaves of a tiling space arising from an aperiodic tiling of $\mR^d$ with finite translational local complexity  admit a natural orientation induced by the translation action, and the various presentations that have been constructed using the structure of the tiles are coherent with this orientation provided one takes the extra step of introducing the simplicial structure on the complexes. Observe also that since we are endowing each $X_n$ with the orientation induced by $L$ and the bonding maps are simplicial, the bonding maps will preserve the orientation of each simplex.

\begin{defn}
A homeomorphism $h: \cM \to \cN$ of matchbox manifolds with corresponding oriented simplicial presentations $\cM = \displaystyle\,{\lim_{\longleftarrow}} \{X_n,f_n\}$ and $\cN = \displaystyle{\lim_{\longleftarrow}} \{Y_n,g_n\}$ with orientations induced by the leaf $L$ of $\cM$ and $h(L)$ of $\cN$   is  \emph{orientation preserving} if $h$ preserves the orientation of $L$ and otherwise $h$ is \emph{orientation reversing}.
\end{defn}

The invariants we construct will be preserved by orientation preserving maps and are intimately related to how their homotopic induced maps act on the algebraic invariants of the approximating spaces $X_n.$

\begin{defn}
Given an oriented simplicial presentation $\cM = \displaystyle\,{\lim_{\longleftarrow}} \{X_n,f_n\},$  a \emph{positive homology class} of $X_n$ is a homology class in the simplicial homology $H_d(X_n)$ that can be represented as the positive integer combination of elementary chains of positively oriented $d$--simplices of some simplicial subdivision of $X_n$, and we denote the set of all positive homology classes as $H_d^+(X_n).$ Similarly, we define $H_d^-(X_n)$ as all the homology classes in the simplicial homology $H_d(X_n)$ that can be represented as the negative integer combination of elementary chains of positively oriented $d$--simplices of some simplicial subdivision of $X_n.$ (The zero class is considered to be in $H_d^+(X_n)\cap H_d^-(X_n)$.)
\end{defn}
Observe that by our choices of orientation for the $X_n$ and their common relation to a chosen leaf $L$, each bonding map $f_n:X_{n+1} \to X_n$ satisfies $f_n(H_d^+(X_{n+1}))\subset  H_d^+(X_n)$ and similarly with $H_d^-(X_{n+1}).$
The following result is key for the topological invariance of the homology core.
\begin{prop}\label{coneprop}
Given a homeomorphism $h: \cM \to \cN$ of $d$--dimensional matchbox manifolds with corresponding oriented simplicial presentations $\cM = \displaystyle\,{\lim_{\longleftarrow}} \{X_n,f_n\}$ and $\cN = \displaystyle{\lim_{\longleftarrow}} \{Y_n,g_n\}$ let   $h':\cM \to \cN$ be any induced homotopic map corresponding to the following commutative diagram
\[
\xymatrix{
X_{n_1} \ar[d]_{h_1}
& X_{n_2} \ar[l]_{f^{n_2}_{n_1}}\ar[d]^{h_2}
& \cdots\ar[l]
& X_{n_k}\ar[d]^{h_k}\ar[l]
& X_{n_{k+1}}\ar[l]_{f^{n_{k+1}}_{n_k}}\ar[d]^{h_{k+1}}
& \cdots\ar[l]\\
Y_1
& Y_2\ar[l]^{g_1}
&\cdots\ar[l]
& Y_k\ar[l]
& Y_{k+1}\ar[l]^{g_k}
& \cdots\ar[l]}
\]
Then for each $i\in \mN,$ either

\begin{itemize}
  \item[(i)] $(h_i)_*(H_d^+(X_{n_i}))\subset H_d^+(Y_i)$ and $(h_i)_*(H_d^-(X_{n_i}))\subset H_d^-(Y_i)$ or
  \item[(ii)]  $(h_i)_*(H_d^+(X_{n_i}))\subset H_d^-(Y_i)$ and $(h_i)_*(H_d^-(X_{n_i}))\subset H_d^+(Y_i)$
\end{itemize}
according as $h$ is orientation (i) preserving or (ii) reversing.
\end{prop}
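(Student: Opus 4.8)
The plan is to translate the geometric statement about orientations of leaves into an algebraic statement about what the induced maps $h_i$ do to the generators of $H_d^+$. The key observation is that a positive homology class in $X_{n_i}$ is, by definition, represented by a positive integer combination of elementary chains of positively oriented $d$-simplices in some subdivision; and in an orientable presentation the orientations of all the $X_n$ are pinned to a single choice of orientation of the dense leaf $L$ (and of $\cN$'s leaves to $h(L)$). So the first step is to fix a $d$-simplex $S$ in the triangulation of $X_{n_i}$, look at the fundamental class $[S]$ it carries (with its chosen positive orientation), and understand $(h_i)_*[S] \in H_d(Y_i)$. Because $h'$ is homotopic to the homeomorphism $h$, and homology is a homotopy invariant of the maps on the inverse limits, we may work with $h'$; the commuting diagram then lets us compute $(h_i)_*$ on positive classes by comparing with the way $h$ itself carries the fundamental cycle of $L$ (built from infinite chains of simplices of the $X_n$, as in the Munkres reference) to the corresponding cycle in $\cN$ built from simplices of the $Y_m$.

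The main step is to show that the sign with which $(h_i)_*$ carries positively oriented $d$-simplices of $X_{n_i}$ into $H_d(Y_i)$ is \emph{globally constant} — the same for every such simplex, and hence determined once and for all by whether $h$ preserves or reverses the orientation of $L$. Here is where I expect the real work to be: one must rule out that $(h_i)_*$ sends some positive simplices to positive classes and others to negative classes. I would argue this using connectedness. The leaf $L$ is dense and, passing through the subsets $S\times K$ of $\cM$, it threads through all the $d$-simplices of $X_{n_i}$; a path in $L$ connecting (lifts of) two simplices $S$ and $S'$ gives, via property (ii) of a simplicial presentation, a chain of adjacent $d$-simplices in $X_{n_i}$ whose coherent orientations (induced by the single orientation of $L$) are compatible across shared codimension-one faces. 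The homeomorphism $h$ carries $L$ to the dense leaf $h(L)$ of $\cN$, which is likewise coherently oriented; and $h'$ being homotopic to $h$ and induced by the $h_m$ means that, up to the level of homology, $(h_i)_*$ must respect this adjacency structure — if it flipped the sign on passing from $S$ to an adjacent $S'$, the images would fail to fit together into a cycle compatible with the image of $L$'s fundamental class under $h$. So the sign is locally constant on the (connected) adjacency graph of $d$-simplices, hence constant.

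Granting that, the proof concludes quickly: if $h$ preserves the orientation of $L$, then on the distinguished generators the sign is $+1$, so $(h_i)_*$ carries every positively oriented $d$-simplex chain of $X_{n_i}$ (and of any subdivision, since subdivision does not change the orientation class) to a class in $H_d^+(Y_i)$; taking positive integer combinations and passing to homology classes gives $(h_i)_*(H_d^+(X_{n_i})) \subset H_d^+(Y_i)$, and negating gives the statement for $H_d^-$. If instead $h$ reverses the orientation, the sign is $-1$ uniformly and the two inclusions swap, giving case (ii). One small point to handle carefully along the way: a positive class in $X_{n_i}$ may only be representable after passing to a subdivision, and $h_i$ is simplicial only with respect to the given triangulations; so I would either invoke the simplicial approximation / subdivision invariance of $(h_i)_*$ on homology, or note that the sign computation is insensitive to subdivision because each small simplex inherits the orientation of the big one containing it. With the uniform-sign lemma in hand, none of this is more than bookkeeping.
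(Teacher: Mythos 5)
Your proposal is correct and follows essentially the same route as the paper: both arguments pin the orientations of all simplices to the single dense leaf $L$, pass to a simplicial approximation of $h_i$ and the induced map $L\to h(L)$, and deduce a uniform sign from the fact that the fundamental class of $L$ (as a sum of all positively oriented $d$-simplices, in infinite-chain homology) is carried to that of $h(L)$ with sign determined by whether $h$ preserves orientation. Your adjacency-graph/local-constancy step is just a more explicit unpacking of the paper's one-line deduction that the simplicial approximation must send positively oriented simplices to positively oriented or degenerate ones.
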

\begin{proof}
Suppose then that we have an orientation preserving homeomorphism $h: \cM \to \cN$ with homotopic induced map $h':\cM \to \cN$ as above, and let $i\in \mN.$  To calculate the map induced on homology  $(h_i)_*:H_d^+(X_{n_i})\to H_d^+(Y_i),$ one first finds a simplicial approximation $H: X_{n_i} \to Y_i$ to $h_i.$ Notice that this simplicial approximation also induces a simplicial map $H_L: L \to h'(L).$ As the path components coincide with the leaves of these spaces and $h'$ is homotopic to $h,$ we have $h'(L)=h(L).$ By hypothesis, $h$ preserves the orientation and maps the positive generator of $H_d(L)$, which is the class formed by the sum of all the elementary chains of positively oriented simplices of dimension $d$, to the positive generator of $H_d(h(L)).$ The same is true then for the homotopic map $h'$ (and the map it induces on leaves) and so also for the simplicial approximation $H_L.$ But that means that $H_L$ must map positively oriented simplices to positively oriented simplices or degenerate simplices. The other cases are similar.
\end{proof}

It is important to realise that even when the underlying map $h$ is an induced homeomorphism, the maps $h_n$ are often not homeomorphisms.

\section{Homology core}\label{core}
In this section we shall introduce the homology core and show the subtle ways it is preserved by homeomorphism, depending on the precise nature of the space in question. In what follows we consider an oriented simplicial presentation $\cM = \displaystyle\,{\lim_{\longleftarrow}} \{X_n,f_n\}$ of a matchbox manifold. We first observe that the groups $H_d(X_{n})$ are free abelian of some finite rank. Now consider the subgroup $P_n$ of $H_d(X_{n})$ generated by $H_d^+(X_n),$ which will then also be a free group of rank say $r_n.$ Let now $V_n:= P_n\bigotimes \mR,$ an $\mR$--vector space of dimension $r_n.$ As previously observed, $(f_n)_*$ maps $H_d^+(X_{n+1})$ into $H_d^+(X_n),$ and so $(f_n)_*\bigotimes id_\mR$ yields a linear map $L_n:V_{n+1}\to V_n.$ (When needing to distinguish these vector spaces or maps for different spaces we add a superscript, e.g. $L_n^\cM.$)

\begin{defn}
The \emph{positive and negative cone} in $V_n$ is
\[
\cC_n:=\left\{\sum  x_i\bigotimes r_i \,|\, r_i \geq 0,\, x_i \in H_d^+(X_n)\,\right\} \cup  \left\{\sum  x_i\bigotimes r_i \,|\, r_i \leq 0,\, x_i \in H_d^+(X_n)\,\right\}
\]
and the \emph{positive cone} in $V_n$ is
\[
\cC^+_n:=\left\{\sum  x_i\bigotimes r_i \,|\, r_i \geq 0,\, x_i \in H_d^+(X_n)\,\right\}.
\]
\end{defn}

Our previous observations can then be rephrased has $L_n(\cC_{n+1}) \subset \cC_n.$ However, this inclusion will often be strict. This leads us to the following.

\begin{defn}\label{core-def}
We define the \emph{homology core at place} $k$ of the oriented presentation $\cM = \displaystyle\,{\lim_{\longleftarrow}} \{X_n,f_n\}$ and linear maps $L_n:V_{n+1}\to V_n$ as above by
\[
\cC_{\cM}(k):= \bigcap_{n>k} \,L_k^n \left( \cC_n \right),
\]
where $L_k^n= L_k\circ  \cdots \circ L_{n-2}\circ L_{n-1}.$ Similarly, the {\em positive homology core} $C^+_\cM(n)$ is given by
\[
\cC^+_{\cM}(k):= \bigcap_{n>k} \,L_k^n \left( \cC^+_n \right).
\]
\end{defn}

The fact that we must consider the core at various ``places" $k$ is a reflection of the fact that induced maps of towers homotopic to a given map do not have to respect the places in the two corresponding towers.

\begin{thm}\label{topinv}
Suppose we have a homeomorphism $h: \cM \to \cN$ of $d$--dimensional matchbox manifolds with corresponding oriented simplicial presentations $\cM = \displaystyle\,{\lim_{\longleftarrow}} \{X_n,f_n\}$ and $\cN = \displaystyle{\lim_{\longleftarrow}} \{Y_n,g_n\}.$

\begin{itemize}
  \item[(i)] Then there are subsequences $m_i,n_i$ and linear maps $K_i$ with $J_i$ that map the cones in the following diagram surjectively.

\begin{equation}\label{conediag}
\xymatrix{
\cC_{\cM}(n_1) \ar@{->>}[d]^{K_1}
& \cC_{\cM}(n_2) \ar@{->>}[l]\ar@{->>}[d]^{K_2}
& \cC_{\cM}(n_3) \ar@{->>}[l]\ar@{->>}[d]^{K_3}
& \cdots\ar@{->>}[l]
& \cC_{\cM}(n_k)\ar@{->>}[d]^{K_k}\ar@{->>}[l]
& \cC_{\cM}(n_{k+1)}\ar@{->>}[l]\ar@{->>}[d]^{K_{k+1}}
& \cdots\ar@{->>}[l]\\
\cC_{\cN}(m_1)
& \cC_{\cN}(m_2)\ar@{->>}[l]\ar@{->>}[lu]_{J_1}
& \cC_{\cN}(m_3)\ar@{->>}[l]\ar@{->>}[lu]_{J_2}
&\cdots\ar@{->>}[l]
& \cC_{\cN}(m_k)\ar@{->>}[l]
& \cC_{\cN}(m_{k+1})\ar@{->>}[l]\ar@{->>}[lu]_{J_k}
& \cdots\ar@{->>}[l]}
\end{equation}
  \item[(ii)] If the linear maps $L^\cN_n$ are eventually injective, then there is an  $\ell$ such that each linear map $J_i$ ($i\geq \ell$) as in the above Diagram \ref{conediag} maps $\cC_{\cN}(m_{i+1})$ isomorphically onto $\cC_{\cM}(n_i).$
  \item[(iii)] Moreover, if there is a uniform (for all $n$) bound to $\dim V_n^\cN,$ then there is an  $\ell$ such that each linear map $J_i$ ($i\geq \ell$) as in the above Diagram \ref{conediag} maps $\cC_{\cN}(m_{i+1})$ isomorphically onto $\cC_{\cM}(n_i).$
\end{itemize}

\end{thm}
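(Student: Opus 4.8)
The plan is to start from Theorem~\ref{Rogers} applied to both $h$ and $h^{-1}$, obtaining induced maps $h'$ and $(h^{-1})'$ homotopic to them, and then to interleave the two resulting towers so that the composites $h'\circ(h^{-1})'$ and $(h^{-1})'\circ h'$ are (homotopic to) the identity and hence, after passing to a further subsequence, agree with the bonding maps of the respective towers up to the usual inverse-limit manipulation. Concretely, I would fix compatible subsequences $X_{n_1}\leftarrow Y_{m_1}\leftarrow X_{n_2}\leftarrow Y_{m_2}\leftarrow\cdots$ together with maps $h_i\colon X_{n_i}\to Y_{m_i}$ and $k_i\colon Y_{m_i}\to X_{n_{i+1}}$ realising $h'$ and $(h^{-1})'$, so that each downward triangle and each upward triangle commutes up to the bonding maps $f^{n_{i+1}}_{n_i}$ and $g^{m_{i+1}}_{m_i}$. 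Applying $H_d(-)$ and tensoring with $\mR$ gives linear maps $(h_i)_*\otimes\mathrm{id}$ and $(k_i)_*\otimes\mathrm{id}$ between the vector spaces $V_{n_i}^\cM$ and $V_{m_i}^\cN$ whose composites in either order are exactly the structure maps $L^\cM$ and $L^\cN$ of the two towers restricted to the chosen subsequence; here I use that $H_d(X_n)$ is free abelian of finite rank so that no torsion phenomena intervene. Call these $K_i$ (the $h$-direction) and $J_i$ (the $h^{-1}$-direction).

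Next I would feed this through Proposition~\ref{coneprop}: since $h$ (hence each $h_i$ and each $k_i$) is orientation preserving or reversing in a consistent way, each $K_i$ and each $J_i$ carries the cone $\cC$ of its source into the cone $\cC$ of its target. Intersecting over the tails of the towers then shows $K_i$ maps $\cC_\cM(n_i)$ into $\cC_\cN(m_i)$ and $J_i$ maps $\cC_\cN(m_{i+1})$ into $\cC_\cM(n_{i+1})$ — here one must check that the image under a linear map of a decreasing intersection of cones is the corresponding decreasing intersection, which uses that each $\cC_k^n(\cC_n)$ is the image of a finitely generated cone and the intersection stabilises in a suitable sense, or more simply that $K_i$ restricted to the relevant $L^n_k(\cC_n)$'s is induced by a map of the towers beyond place $n_i$. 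The surjectivity in part~(i) then comes from the fact that $K_i$ followed by $J_i$ equals the map $\cC_\cM(n_{i+1})\to\cC_\cM(n_i)$ induced by the bonding maps, which is surjective onto $\cC_\cM(n_i)$ essentially by the definition of the core as an intersection of images; so $J_i$ is a fortiori surjective onto $\cC_\cM(n_i)$, and symmetrically the $K_i$'s are surjective.

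For parts~(ii) and~(iii), the point is to upgrade surjectivity of $J_i$ to bijectivity. Since $J_i\circ K_i$ is the core bonding map of $\cM$ and $K_{i+1}\circ J_i$ is the core bonding map of $\cN$, if the $L^\cN_n$ are eventually injective then the $\cN$-side bonding maps on the cones are eventually injective, forcing $J_i$ to be injective for large $i$; combined with surjectivity this gives the isomorphism claimed in~(ii). Part~(iii) follows from~(ii) once one observes that a uniform bound on $\dim V_n^\cN$ makes the linear maps $L^\cN_n$ \emph{eventually injective} automatically: a decreasing sequence of subspaces (or the ranks of the composites $L^n_k$) in a fixed finite-dimensional ambient space must stabilise, so after some point each $L^\cN_n$ restricts to an isomorphism between the stabilised images, which is exactly eventual injectivity on the relevant subspaces; then one invokes~(ii). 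The main obstacle I anticipate is the bookkeeping in the first paragraph — arranging the two interleaved towers and the subsequences $m_i,n_i$ so that $J_i\circ K_i$ and $K_{i+1}\circ J_i$ literally are the bonding maps rather than merely homotopic-up-to-shift, and confirming that passing to the cores (an infinite intersection) commutes with all of this; once the diagram is set up honestly, Proposition~\ref{coneprop} and elementary linear algebra of finite-dimensional cones do the rest.
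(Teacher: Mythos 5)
Your proposal follows essentially the same route as the paper: apply Rogers' approximation theorem to $h$ and $h^{-1}$, interleave the two towers, observe that the induced diagram commutes on $H_d$ because the composites are homotopic to the identities, invoke Proposition~\ref{coneprop} to restrict to the cones, deduce surjectivity of $K_i$ and $J_i$ from the commuting triangles whose horizontal edges are surjective by the definition of the core, and upgrade to injectivity in (ii) and (iii) via the same triangles together with stabilisation of dimension/rank in the finite-dimensional $V_n^{\cN}$. Apart from a couple of harmless index slips (e.g.\ $J_i$ lands in $\cC_{\cM}(n_i)$, not $\cC_{\cM}(n_{i+1})$, and the relevant composite is $J_i$ after $K_{i+1}$), the argument matches the paper's proof.
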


\begin{proof}
(i) Suppose then that  $h',h''$ are induced maps as in Theorem \ref{Rogers} corresponding to $h$ and  $h^{-1}.$ This then leads to the following diagram between subtowers after reindexing:

\[
\xymatrix{
X_{n_1} \ar[d]^{h_1}
& X_{n_2} \ar[l]_{f^{n_2}_{n_1}}\ar[d]^{h_2}
& X_{n_3} \ar[l]_{f^{n_3}_{n_2}}\ar[d]^{h_2}
& \cdots\ar[l]
& X_{n_k}\ar[d]^{h_k}\ar[l]
& X_{n_{k+1}}\ar[l]_{f^{n_{k+1}}_{n_k}}\ar[d]^{h_{k+1}}
& \cdots\ar[l]\\
Y_{m_1}
& Y_{m_2}\ar[l]^{g_{m_1}^{m_2}}\ar[lu]_{\ell_1}
& Y_{m_3}\ar[l]^{g_{m_2}^{m_3}}\ar[lu]_{\ell_2}
&\cdots\ar[l]
& Y_{m_k}\ar[l]
& Y_{m_{k+1}}\ar[l]^{g_{m_{k}}^{m_{k+1}}}\ar[lu]_{\ell_k}
& \cdots\ar[l]}
\]
In general this diagram will \textit{not} be commutative, but the maps induced on homology  are commutative since the compositions of $h'$ and $h''$ are homotopic to the respective identities. By Proposition \ref{coneprop} we are then led to the following commutative diagram of (restrictions of) linear maps.

\begin{equation}\label{conediag2}
\xymatrix{
\cC_{\cM}(n_1) \ar[d]
& \cC_{\cM}(n_2) \ar[l]\ar[d]
& \cC_{\cM}(n_3) \ar[l]\ar[d]
& \cdots\ar[l]
& \cC_{\cM}(n_k)\ar[d]\ar[l]
& \cC_{\cM}(n_{k+1)}\ar[l]\ar[d]
& \cdots\ar[l]\\
\cC_{\cN}(m_1)
& \cC_{\cN}(m_2)\ar[l]\ar[lu]
& \cC_{\cN}(m_3)\ar[l]\ar[lu]
&\cdots\ar[l]
& \cC_{\cN}(m_k)\ar[l]
& \cC_{\cN}(m_{k+1})\ar[l]\ar[lu]
& \cdots\ar[l]}
\end{equation}

By construction, each horizontal map in the diagram is surjective. Each of the  maps $K_i$ is part of a commutative triangle:

\begin{equation}\label{triangle}
\xymatrix{
\cC_{\cM}(n_i) \ar[d]^{K_i}
&  \\
\cC_{\cN}(m_i)
& \cC_{\cN}(m_{i+1})\ar@{->>}[l] \ar[lu]_{J_i}
}.
\end{equation}
As the horizontal map is surjective, $K_i$ is surjective as required in Diagram \ref{conediag}. Similar arguments apply to the $J_i.$

(ii) Assume then that the horizontal maps are additionally injective from some point in the  tower associated to $\cN.$ Then for sufficiently large $i$ in the triangle \ref{triangle}, we see  $J_i$ must also be injective on $\cC(m_{i+1}).$

(iii) Assume then that there is a uniform (for all $n$) bound to $\dim V_n^\cN$. This then implies a uniform bound for the \textit{topological} dimension of $\cC_{\cN}(m).$ As the maps  $(L_n^m)^\cN$ are linear, they cannot raise (topological) dimension. Hence, for sufficiently large values (say, $m\geq N$) the topological dimension of $\cC_{\cN}(m)$ must have the same value $D.$ Thus, for all $k>N$ the restriction of the maps $(L_N^k)^\cN$ to $\cC_{\cN}(k)$ must be injective and we have the hypothesis for (ii).
\end{proof}

The spaces which are best understood are those for which there is a uniform bound on $\textrm{rank} \,H_d(X_n)$ for a presentation. In such cases, by telescoping the given presentation, one can obtain a presentation for which $\textrm{rank} H_d(X_n)$ is constant. We can already see from the above the homology core yields a good deal of information for such spaces. However, depending on the exact conditions we can say much more in special cases.

\begin{defn}\label{stable}
An oriented simplicial presentation $\cM = \displaystyle\,{\lim_{\longleftarrow}} \{X_n,f_n\}$   is said to be \emph{homologically $\mZ$--stable} if  for each $n$ $(f_n)_*  : H_d(X_{n+1})\to H_d(X_n)$ and   $L_n:V_{n+1}\to V_n$ are isomorphisms, and we say the presentation is \emph{homologically $\mQ$--stable} if  for each $n$ $(f_n)_*  : H_d(X_{n+1},\mQ)\to H_d(X_n,\mQ)$ and   $L_n:V_{n+1}\to V_n$ are isomorphisms.
\end{defn}

\begin{remark}\label{invlim}
\em{Observe that in the case of a $\mQ$--stable presentation, each core $\cC_{\cM}(k)$ can be identified with the limit of the inverse sequence of the cones $\cC_n$ $(n\geq k)$ with bonding maps the restrictions of the $L_n$. The core $\cC_{\cM}(k)$, however, retains some of the geometric information lost in the limit as it includes an embedding in $V_k.$  However, this identification with the inverse limit will be significant when relating the cores to their dual counterparts in the following section.}
\end{remark}

\begin{thm}\label{orbit}
For $\mX \in \{\,\mZ,\mQ\,\},$  suppose we have homologically $\mX$--stable simplicial presentations $\cM = \displaystyle\,{\lim_{\longleftarrow}} \{X_n,f_n\}$ and $\cN = \displaystyle{\lim_{\longleftarrow}} \{Y_n,g_n\}$ and a homeomorphism $h: \cM \to \cN.$ Choose a basis for each $V_n^\cM,V_n^\cN$ consisting of elements of $H_d^+(X_n),\,H_d^+(Y_n)$ so that the corresponding linear maps $L^\cM_n,\, L^\cN_n$ are represented by elements of $GL(D,\mX)$, where $D$ is the common dimension of the $V_n^\cM,V_n^\cN.$ Then, with respect to these bases, all the homology cores $\cC_{\cM}(m)$ and $\cC_{\cN}(n)$ are in the same $GL(D,\mX)$--orbit.
\end{thm}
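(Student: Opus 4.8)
The plan is to combine Theorem~\ref{topinv}(iii) with the $\mX$--stability hypothesis to turn the one-sided tower of surjections in Diagram~\ref{conediag} into a genuine two-sided system of isomorphisms, and then to transport everything back into a single fixed vector space $\mX^D$ using the chosen bases. First I would note that $\mX$--stability means each $L_n^\cM$ and $L_n^\cN$ is an isomorphism, so in particular the hypothesis ``there is a uniform bound on $\dim V_n^\cN$'' of Theorem~\ref{topinv}(iii) is automatic (all $V_n^\cN$ have the same dimension $D$), and moreover the restriction $L_k^n\colon \cC_n\to L_k^n(\cC_n)$ is injective. Hence by Remark~\ref{invlim} the horizontal bonding maps $\cC_\cM(n_{k+1})\to\cC_\cM(n_k)$ and $\cC_\cN(m_{k+1})\to\cC_\cN(m_k)$ in Diagram~\ref{conediag} are themselves isomorphisms of cones — being surjective (as recorded in the theorem) and injective (since $L_n^\cM$ is injective on the relevant cone). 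Likewise the conclusion of part (iii) gives, for $i\geq \ell$, isomorphisms $J_i\colon \cC_\cN(m_{i+1})\xrightarrow{\ \cong\ }\cC_\cM(n_i)$.

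Next I would fix one place, say $k\geq \ell$, and chase the commutative triangle~\ref{triangle} together with the two horizontal isomorphisms to get a single isomorphism $\cC_\cN(m_{k+1})\xrightarrow{\cong}\cC_\cM(n_k)$ that is the restriction of a linear isomorphism between the ambient vector spaces $V_{m_{k+1}}^\cN$ and $V_{n_k}^\cM$. Composing on either side with the (iso)bonding maps $L^n_k$ of the two stable towers — which in the chosen bases are elements of $GL(D,\mX)$ — lets me realise the relation between \emph{any} two cores $\cC_\cM(m)$ and $\cC_\cN(n)$: write
\[
\cC_\cM(m) \;=\; (L^{n_k}_m)^\cM\big(\cC_\cM(n_k)\big)
\;=\; (L^{n_k}_m)^\cM\circ J_k\circ \big((L^{m_{k+1}}_n)^\cN\big)^{-1}\big(\cC_\cN(n)\big),
\]
and the composite on the right is a product of matrices in $GL(D,\mX)$ (the bonding maps are invertible $\mX$--matrices by stability, and $J_k$ is an $\mX$--linear isomorphism between two free $\mX$--modules of rank $D$, hence is given by a $GL(D,\mX)$ matrix in the chosen bases — this uses that $J_k$ carries the integral/rational lattice $P_{n_k}$ onto $P_{n_{k+1}}$, which follows because $J_i$ is induced by $(h_i)_*$ composed with bonding maps, all defined over $\mX$). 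Since the cores are genuine subsets of $\mX^D$ once the bases are fixed, this exhibits $\cC_\cM(m)$ and $\cC_\cN(n)$ as lying in the same $GL(D,\mX)$--orbit.

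The main obstacle I anticipate is the bookkeeping at the integral level in the $\mX=\mZ$ case: one must check that the map $J_k$, which a priori is only guaranteed to be an isomorphism of \emph{cones} (hence of the real vector spaces they span), is actually defined over $\mZ$ and \emph{invertible over $\mZ$}, i.e.\ lies in $GL(D,\mZ)$ rather than merely $GL(D,\mQ)$. This should follow from the fact that $J_k$ arises, via the commutative triangle, as a composite of the integrally-defined maps $(h_i)_*$, $(f^\bullet_\bullet)_*$ and $(g^\bullet_\bullet)_*$ together with inverses of the bonding maps, and $\mZ$--stability guarantees those inverses are again integral; the injectivity part of Theorem~\ref{topinv}(ii)--(iii), applied symmetrically to $h$ and $h^{-1}$, forces the composite to be an integral isomorphism in both directions. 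Care is also needed to confirm that a basis of $V_n$ consisting of elements of $H_d^+(X_n)$ can always be chosen so that $P_n$ is exactly the $\mX$--span of that basis and the $L_n$ become $GL(D,\mX)$ matrices — but this is immediate from the definition of $P_n$ as a free $\mX$--module of rank $D=r_n$ on which $L_n$ restricts to an isomorphism. With these points settled, the orbit statement is just the observation that a finite composite of $GL(D,\mX)$ matrices is again in $GL(D,\mX)$.
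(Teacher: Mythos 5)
Your proposal is correct and follows essentially the same route as the paper's (much terser) proof: under $\mX$--stability the horizontal maps in Diagram~\ref{conediag} become isomorphisms, hence the diagonal maps $J_i$ are isomorphisms too, and with the chosen bases these are matrices in $GL(D,\mX)$ carrying one core onto another. Your extra care over the integrality of $J_k$ in the $\mZ$ case (via the two-sided composites with the horizontal isomorphisms) is a detail the paper leaves implicit, but it is the same argument.
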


\begin{proof}
Consider now diagram \ref{conediag} as before and the associated diagram on homology groups with $\mX$ coefficients. Under our new hypotheses, all the horizontal maps are isomorphisms and hence all the vertical and diagonal maps are also isomorphisms as well. With the bases we have chosen, the result follows directly.
\end{proof}
Applying similar arguments to the general case as Theorem~\ref{topinv} one obtains that for homeomorphic matchbox manifolds $\cM,\cN$ (without stability requirements) that the cores $\cC_{\cM}(m)$ and $\cC_{\cN}(n)$ are images of matrices with integer entries for \emph{restricted} choices of $m,n$ as indicated in the theorem. Observe that if there is a uniform bound on $\textrm{rank} \, H_d(X_n)$, then we can find an inverse sequence of groups which is $\mQ$--stable and which is pro--equivalent to the inverse sequence of the the $V_n$ and $L_n$, and this is sufficient to draw the same conclusions as in the above theorem.

\section{Homology core and invariant measures}

In this section we consider matchbox manifolds that are given as suspensions $\cM : = \widetilde{M}\times_{\G}Z$  over a closed $\mathrm{PL}$ manifold $M$ with fundamental group $\G=\pi_1(M).$ We denote the associated bundle projection  by $\pi: \cM \to M.$ In this setting, in many cases we can directly relate the homology core to the space of $\G$--invariant, finite Borel measures fora a topologically transitive $\G$--actions on the zero dimensional space $Z.$ We shall only finite measures and henceforth will assume all measures are finite.

\begin{defn}
Let us say that the matchbox manifold $\cM=\widetilde{M}\times_{\G}Z$ has a {\em consistent presentation\/} if it has an oriented simplicial presentation $\cM= \displaystyle\,{\lim_{\longleftarrow}} \{X_n,f_n\}$ for which the fibers of the projection $\cM \to X_1$ are subsets of the fibers of the bundle map $\pi: \cM \to M.$
\end{defn}

The class of matchbox manifolds obtained by such a suspension construction is quite large and includes all translational tiling spaces of finite local complexity \cite{SW2003}. However, it does not include all two-dimensional orientable examples, \cite{FarrellJones1981}.

\begin{defn} Denote by $\fM(Z)$ the set of all Borel measures on the space $Z$. For a ring $R=\mZ$ or $\mR$, denote by $C(Z;R)$ the $R$-module of continuous $R$-valued functions on $Z$. A {\em positive element\/} of $C(Z;R)$ is a function that takes only non-negative values. A {\em positive homomorphism\/} $C(Z;R)\to \mR$ is an $R$-linear map which takes positive elements to non-negative numbers. We write $\phom_R(C(Z;R);\mR)$ for the set of positive homomorphisms $C(Z;R)\to \mR$.
\end{defn}

\begin{lemma}\label{PHOM}
$$\fM(Z)=\phom_\mZ(C(Z;\mZ);\mR)\,.$$
\end{lemma}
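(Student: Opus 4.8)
The plan is to exhibit mutually inverse correspondences between Borel measures on $Z$ and positive $\mZ$--linear functionals on $C(Z;\mZ)$. Given $\mu \in \fM(Z)$, the assignment $f \mapsto \int_Z f\,d\mu$ is clearly $\mZ$--linear on $C(Z;\mZ)$ and sends a non-negative function to a non-negative number, so it lies in $\phom_\mZ(C(Z;\mZ);\mR)$. This gives a map $\Phi\colon \fM(Z) \to \phom_\mZ(C(Z;\mZ);\mR)$, and the content of the lemma is that $\Phi$ is a bijection. Injectivity of $\Phi$ follows because $Z$ is a zero--dimensional compact metric space: the indicator functions $\chi_U$ of clopen sets $U$ are integer-valued continuous functions, the clopen sets generate the Borel $\sigma$--algebra, and a finite Borel measure on such a space is determined by its values $\mu(U) = \int_Z \chi_U\,d\mu$ on clopen sets (a $\pi$--$\lambda$ / monotone class argument, using that the clopen sets form an algebra).

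For surjectivity, I would start from a positive homomorphism $\lambda\colon C(Z;\mZ)\to\mR$ and first observe that it extends uniquely to a positive $\mR$--linear functional $\widehat\lambda$ on $C(Z;\mR)$. The point is that every $f\in C(Z;\mR)$ can be uniformly approximated by $\mZ$--linear combinations of clopen indicators — equivalently by functions of the form $\tfrac1N g$ with $g\in C(Z;\mZ)$, since $Z$ is zero--dimensional and totally disconnected, so continuous real functions are uniform limits of locally constant (finite rational-combination of clopen indicator) functions. Positivity of $\lambda$ forces it to be monotone and hence Lipschitz with respect to the sup-norm on the dense subgroup it is defined on (if $\|f\|_\infty \le k$ with $f \in C(Z;\mZ)$, $k\in\mZ_{\ge 0}$, then $k\pm f \ge 0$, so $|\lambda(f)| \le k\,\lambda(1)$; rescaling handles the rational case), so $\lambda$ extends continuously and uniquely to a bounded positive linear functional $\widehat\lambda$ on all of $C(Z;\mR)$. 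Then the Riesz representation theorem produces a unique finite (positive) Borel measure $\mu$ on $Z$ with $\widehat\lambda(f) = \int_Z f\,d\mu$ for all $f \in C(Z;\mR)$, and in particular $\lambda(f) = \int_Z f\,d\mu$ for $f \in C(Z;\mZ)$, i.e. $\Phi(\mu) = \lambda$. Finally one checks $\Phi$ respects the identifications so that the two sets are genuinely equal as claimed, not merely in bijection — but since the displayed statement is an equality of sets, I read it as identifying $\mu$ with the functional $\Phi(\mu)$, which the above makes canonical.

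The main obstacle, such as it is, is purely bookkeeping: making sure the extension from $C(Z;\mZ)$ to $C(Z;\mR)$ is legitimate and unique. This rests on two facts about zero--dimensional compact metric spaces — that $\mZ$--linear combinations of clopen indicators are sup-norm dense in $C(Z;\mR)$, and that a positive $\mZ$--linear functional is automatically uniformly continuous on this dense subgroup (after rational rescaling) — together with the classical Riesz representation theorem. None of these steps is deep; the only subtlety worth spelling out is the density/approximation argument and the automatic continuity from positivity, after which Riesz does the rest.
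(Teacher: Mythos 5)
Your argument is correct and follows essentially the same route as the paper: both reduce the statement to the Riesz representation theorem via the sup-norm density of (rationally rescaled) $\mZ$-valued step functions in $C(Z;\mR)$. The only difference is one of packaging — you spell out the automatic continuity of a positive $\mZ$-linear functional and its unique positive extension to $C(Z;\mR)$, whereas the paper compresses this into the identification of step functions with $C(Z;\mZ)\otimes\mR$ and the adjunction $\phom_\mR(C(Z;\mZ)\otimes\mR;\mR)\equiv\phom_\mZ(C(Z;\mZ);\mR)$.
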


\begin{proof}
The Riesz Representation theorem tells us that the set of measures $\fM(Z)$ can be identified with $\phom_\mR(C(Z;\mR);\mR)$, where $\mu\in \fM(Z)$ corresponds to the positive homomorphism $f\mapsto\int_Zfd\mu$. Any functional $\int_Z-\,d\mu$ is however determined by its values on $\mR$-valued step functions taking finitely many values; this set of functions can be equated with $C(Z;\mZ)\otimes\mR$. The lemma follows by noting the equivalence
$$\phom_\mR(C(Z;\mZ)\otimes\mR;\mR)\equiv\phom_\mZ(C(Z;\mZ);\mR)\,.$$
\end{proof}

\begin{prop}\label{cohMM}
Suppose the $d$-dimensional matchbox manifold $\cM=\widetilde{M} \times_{\G}Z$ has a consistent presentation. Then the top dimension Cech cohomology, $H^d(\cM)$ can be identified with $C(Z;\mZ)_\G$, the $\G$-coinvariants of $C(Z;\mZ)$.
\end{prop}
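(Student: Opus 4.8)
The plan is to compute $H^d(\cM)$ from the structure of $\cM$ as the total space of the flat bundle $\pi\colon\cM\to M$ with totally disconnected fibre $Z$, rather than directly from the simplicial presentation. Working in \v{C}ech cohomology, which for the compact Hausdorff spaces $\cM,M$ agrees with sheaf cohomology, I would run the Leray spectral sequence of $\pi$: $E_2^{p,q}=H^p(M;R^q\pi_*\underline{\mZ})\Rightarrow H^{p+q}(\cM;\mZ)$, where $\underline{\mZ}$ is the constant sheaf. The first key step is that the higher direct images vanish: the stalk of $R^q\pi_*\underline{\mZ}$ at $m$ is the direct limit of $H^q(\pi^{-1}U;\mZ)$ over neighbourhoods $U$ of $m$, and choosing $U$ a connected contractible trivialising chart gives $\pi^{-1}U\cong U\times Z$ with $H^q(U\times Z;\mZ)\cong H^q(U;\mZ)\otimes C(Z;\mZ)$ (K\"unneth: $H^0(Z;\mZ)=C(Z;\mZ)$ is torsion free and $H^q(Z;\mZ)=0$ for $q\geq1$, as $Z$ is an inverse limit of finite discrete sets). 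This is $C(Z;\mZ)$ for $q=0$ and $0$ for $q\geq1$, so the spectral sequence collapses to the edge isomorphism $H^d(\cM;\mZ)\cong H^d(M;\cL)$, where $\cL:=R^0\pi_*\underline{\mZ}$.

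The second step identifies $\cL$ as the local system on $M$ attached to the $\G$--action: over a connected trivialising chart it is the constant sheaf with stalk $C(Z;\mZ)$, since a continuous map $V\times Z\to\mZ$ with $V$ connected is constant on each slice $V\times\{z\}$ and so is pulled back along $V\times Z\to Z$ from an element of $C(Z;\mZ)$, and the monodromy along $\gamma\in\pi_1(M)=\G$ is precomposition on $C(Z;\mZ)$ with the homeomorphism of $Z$ given by $\gamma^{\pm1}$. Next comes Poincar\'e duality with local coefficients: as $M$ is closed and oriented of dimension $d$, cap product with the fundamental class gives $H^d(M;\cL)\cong H_0(M;\cL)$, which is valid with coefficients in any $\mZ[\G]$--module. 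Finally, for a connected space $H_0$ with coefficients in a $\mZ[\G]$--module $A$ is the coinvariant module $A_\G=A\otimes_{\mZ[\G]}\mZ$, so $H_0(M;\cL)\cong C(Z;\mZ)_\G$ — and the ambiguity $\gamma$ versus $\gamma^{-1}$ in the monodromy is harmless since inversion is a bijection of $\G$. Composing the isomorphisms gives $H^d(\cM)\cong C(Z;\mZ)_\G$.

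I do not anticipate a single hard obstacle, only the need to be careful that these classical tools — the Leray spectral sequence, Poincar\'e duality with local coefficients, and the identification of $H_0$ with coinvariants — are being applied to \v{C}ech cohomology, which is legitimate because \v{C}ech and sheaf cohomology agree on paracompact Hausdorff spaces and $M$ is locally contractible. Note that this argument uses only that $\cM\to M$ is a zero--dimensional bundle over a closed oriented manifold, not the consistency of the presentation; the consistency hypothesis seems included for uniformity with the rest of the section and is genuinely needed only later, in Theorem~\ref{invariantsandcone}. If one instead wishes to stay inside the simplicial framework, the alternative is to use continuity of \v{C}ech cohomology to write $H^d(\cM)$ as the direct limit of the groups $H^d(X_n)$ and to check, from the consistency condition, that each $X_n$ fibres over a subdivision of $M$ with fibre over a top simplex a finite clopen quotient $Z/\cP_n$ of $Z$, with the partitions $\cP_n$ generating the topology of $Z$ and the gluing governed by the $\G$--action; then one computes the direct limit of the $H^d(X_n)$ to be $C(Z;\mZ)_\G$ by a longer but elementary argument. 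Either way the point is that in top degree the cohomology of $\cM$ sees only $H^0$ of the fibre $Z$, twisted by the monodromy and integrated over the oriented base.
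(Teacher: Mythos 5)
Your proof is correct, and its first half coincides with the paper's: both run the spectral sequence of the zero--dimensional bundle $Z\to\cM\to M$ and observe that it collapses onto the row $q=0$ because the \v{C}ech cohomology of the totally disconnected fibre is $C(Z;\mZ)$ concentrated in degree zero, leaving $H^d(\cM)\cong H^d(M;C(Z;\mZ))$ with coefficients twisted by the monodromy action of $\G$. You are in fact slightly more careful than the paper at this step: the paper invokes a ``Serre spectral sequence,'' but the computation it performs (taking $H^0$ of the fibre to be the \emph{continuous} functions $C(Z;\mZ)$ rather than all functions $Z\to\mZ$) is really the Leray/sheaf-theoretic sequence you describe, which is the right tool for \v{C}ech cohomology. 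The two arguments diverge in the second half. The paper proves $H^d(M;A)\cong A_\G$ by a direct cochain computation: it lifts the triangulation of $M$ to $\widetilde{M}$, notes that the $\G$-equivariant top-dimensional cochains modulo coboundaries are generated by a single copy of $A$ (connecting top simplices through codimension-one faces), and then uses loops representing each $\gamma\in\G$ to show that the relations imposed are exactly those defining the coinvariants. You instead quote Poincar\'e duality with local coefficients, $H^d(M;\cL)\cong H_0(M;\cL)$, together with the standard identification of $H_0$ of a connected space with the coinvariants of the coefficient module. Both are valid; the paper's route is self-contained and elementary (in effect an explicit proof of top-degree twisted Poincar\'e duality), while yours is shorter at the cost of citing a bigger theorem, and it is cleaner than the aspherical-case argument the paper sketches in the remark following the proposition, since it needs no hypothesis beyond $M$ being closed and oriented. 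Your observation that the consistency of the presentation is never used---only the bundle structure over the closed oriented manifold---is accurate and consistent with the paper's own proof, which likewise does not invoke the presentation. Your disposal of the $\gamma$ versus $\gamma^{-1}$ ambiguity is also fine, since the subgroup generated by $\{a-\gamma a\}$ equals that generated by $\{a-\gamma^{-1}a\}$.
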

\begin{proof}
A Serre spectral sequence for the cohomology of $\cM$ using the bundle structure
$$Z\longrightarrow \widetilde{M}\times_{\G}Z=\cM \xrightarrow{\pi} M$$
yields an $E_2$ page
$$E_2^{p,q}=\left\{
\begin{array}{ll}
H^p(M;H^0(Z))=H^p(M;C(Z;\mZ))&\mbox{ if }q=0\\
0&\mbox{ if }q\not=0\,.
\end{array}\right.$$
This follows from the fact that the Cech cohomology of a totally disconnected space $Z$ is $C(Z;\mZ)$ in dimension 0 and is trivial in all higher dimensions.
The spectral sequence thus collapses, with no extension problems, giving $H^p(\cM)=H^p(M;C(Z;\mZ))$. To conclude the proof, we show that in general, a group cohomology $H^d(M;A)$ for a closed, orientable triangulated $d$-manifold with fundamental group $\G$ and coefficients $A$ with (potentially non-trivial) $\G$-action can be identified with the coinvariants $A_\G$.

Lift the triangulation of $M$ to a triangulation on the universal cover $\widetilde{M}$, and consider $C^d_\G(\widetilde{M};A)$, the $\G$-equivariant $d$-cochains on $\widetilde{M}$ with values in $A$. As $M$ is compact these form a free, finite dimensional $A$-module. As we can find a path from the interior of one $d$-simplex to that of any other, passing only through $d-1$ simplices, the cohomology
\begin{equation}\label{cohomology}
H^d(M;A)=\frac{C_\G^d(\widetilde{M};A)}{\mbox{Im}\left(\delta^d\colon C_\G^{d-1}(\widetilde{M};A)\to C_\G^d(\widetilde{M};A)\right)}
\end{equation}
is generated by a single copy of $A$. However, for each $\gamma\in\G$ and each $d$-simplex $\sigma$, there is a path from the interior of $\sigma$ to itself which represents $\gamma$, and crosses only codimension one simplices. The sum of the coboundaries of these $d-1$ simplices, taken over all $\gamma\in\G$, show that the quotient (\ref{cohomology}) is the full coinvariants $A_\G$.
\end{proof}

\begin{remark}{\em
In the situation where the manifold $M$ is also aspherical, we can prove more. This case includes any $d$-torus, as is the case when $\cM$ is a tiling space for a $d$-dimensional tiling of finite local complexity, and also the case when $M$ is any Riemannian manifold of non-positive curvature. If $M$ is aspherical (so, $\pi_n(M)=0$ for all $n>1$), then $M$ is a model for the classifying space $B\G$. The cohomology $H^p(M;C(Z;\mZ))$ can thus be identified with the group homology $H^p(\G;C(Z;\mZ))$. Moreover, the Poincar\'e duality of the manifold $M$ tells us that $\G$  is a Poincar\'e duality group, and this latter property implies that, for any $\G$-module $A$, the group homology and cohomology of $\G$ with coefficients in $A$ are related by the isomorphism
\begin{equation}H_n(\G;A)\cong H^{d-n}(\G;A)\,.\end{equation}
The conclusion of Proposition \ref{cohMM} now follows since
$$H^d(\cM)=H^d(M;C(Z;\mZ))=H_0(\G;C(Z;\mZ))=C(Z;\mZ)_\G$$
where the last equivalence can be taken as the definition of group homology (i.e., that for a given group $\G$, the group homologies $H_*(\G;-)$ are the left derived functors of the coinvariant functor $A\mapsto A_\G$; see, for example, \cite{KB} section II.3). Clearly though, for such manifolds $M$ more is true and the intermediate dimensional cohomology can be described in a fashion similar to that used in \cite{JRH} section 3.
}
\end{remark}

\begin{cor}\label{invmeas}
Suppose $\cM$ is an oriented matchbox manifold of dimension $d$ with a consistent presentation. Then $\fM^\G\!(Z)$, the $\G$-invariant measures on $Z$, can be identified
$$\fM^\G\!(Z)=\phom_\mZ(C(Z;\mZ)_\G;\mR)=\phom_\mZ(H^d(\cM);\mR)\,.$$
\end{cor}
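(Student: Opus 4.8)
The plan is to assemble the claimed identification from the two results immediately preceding it. First I would observe that Lemma~\ref{PHOM}, applied verbatim, gives $\fM(Z)=\phom_\mZ(C(Z;\mZ);\mR)$; what remains is to isolate the subset of $\G$-invariant measures and to feed in Proposition~\ref{cohMM}. A measure $\mu\in\fM(Z)$ is $\G$-invariant precisely when $\int_Z (\gamma\cdot f)\,d\mu=\int_Z f\,d\mu$ for all $\gamma\in\G$ and all $f\in C(Z;\mZ)$, i.e.\ when the corresponding positive homomorphism $\phi_\mu\colon C(Z;\mZ)\to\mR$ kills every element of the form $f-\gamma\cdot f$. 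By definition these elements generate the submodule defining the $\G$-coinvariants $C(Z;\mZ)_\G$, so $\phi_\mu$ factors through the quotient map $C(Z;\mZ)\twoheadrightarrow C(Z;\mZ)_\G$ exactly when $\mu$ is $\G$-invariant. Conversely any positive homomorphism $C(Z;\mZ)_\G\to\mR$ pulls back to a $\G$-invariant positive homomorphism on $C(Z;\mZ)$, and positivity is preserved because the quotient map sends positive elements to positive elements (and, modulo checking, every positive element of the quotient lifts to a positive element upstairs — this is the one point needing a line of care). Hence $\fM^\G(Z)=\phom_\mZ(C(Z;\mZ)_\G;\mR)$.

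Second, I would simply cite Proposition~\ref{cohMM}: since $\cM$ has a consistent presentation and is $d$-dimensional and oriented, $H^d(\cM)\cong C(Z;\mZ)_\G$ as abelian groups, in fact this isomorphism can be taken to respect the natural positive cones (the image of the positive functions, which under the spectral-sequence/coinvariants identification corresponds to the positive homology classes dual to positively oriented top simplices). Substituting this isomorphism into the previous paragraph yields $\phom_\mZ(C(Z;\mZ)_\G;\mR)=\phom_\mZ(H^d(\cM);\mR)$, completing the chain of equalities in the statement.

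The main obstacle, and the only place the argument is more than bookkeeping, is the handling of positivity under the quotient $C(Z;\mZ)\to C(Z;\mZ)_\G$. For the factorisation direction it is automatic that a positive homomorphism killing the relations descends; but to get a genuine identification of the \emph{positive} homomorphism sets one must check that an $R$-linear map on the quotient that is non-negative on the image of positive functions is the same data as a $\G$-invariant positive homomorphism upstairs — equivalently, that ``positive in the quotient'' is detected by lifts that are positive upstairs, or at least that the cone of positive elements in $C(Z;\mZ)_\G$ is the image of the cone upstairs. Since $Z$ is compact zero-dimensional, a positive $\mZ$-valued step function is a non-negative integer combination of indicator functions of clopen sets, and each $\gamma$-translate of such an indicator is again such an indicator, so the positive cone is $\G$-stable and its image in the coinvariants is exactly the cone generated by classes of clopen indicators; this is the natural positive structure on $H^d(\cM)$ and the identification goes through. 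The rest — invoking Lemma~\ref{PHOM} and Proposition~\ref{cohMM} — is immediate.
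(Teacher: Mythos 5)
Your argument is correct and follows essentially the same route as the paper: Lemma \ref{PHOM} to pass from measures to positive homomorphisms, the adjunction identifying $\G$-invariant homomorphisms on $C(Z;\mZ)$ with homomorphisms on the coinvariants $C(Z;\mZ)_\G$, and Proposition \ref{cohMM} for the final identification. The paper compresses the middle step into the phrase ``a simple adjunction''; your explicit treatment of positivity under the quotient (taking the positive cone of $C(Z;\mZ)_\G$ to be the image of the cone upstairs) is exactly the point being glossed there and is handled correctly.
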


\begin{proof}
As the $\G$ action on $Z$ induces an action on $C(Z;\mZ)$ which takes positive elements to positive elements, a simple adjunction yields the identification
$$\begin{array}{rl}
\fM^\G(Z)=&\!\!\!\mbox{Positive $\G$-invariant $\mZ$-linear homomorphisms }C(Z;\mZ)\to\mR,\quad\mbox{by Lemma }\ref{PHOM}\\
=&\!\!\!\phom_\mZ(C(Z;\mZ)_\G;\mR)\\
=&\!\!\!\phom_\mZ(H^d(\cM);\mR),\qquad\mbox{by Lemma }\ref{cohMM}\,.
\end{array}$$
\end{proof}

The set of positive homomorphisms $\phom_\mZ(H^d(\cM);\mR)$, being dual to a cohomological gadget, has a natural homological interpretation, the homology core of Section \ref{core}.

\begin{thm}\label{invariantsandcone}
Let $\cM$ be an oriented matchbox manifold of dimension $d$ with a consistent presentation, and assume also that the presentation is homologically $\mQ$-stable and the $H^+_d(X_n)$ generate the whole of $H_d(X_n)$ for all sufficiently large $n$. Then for any $n$, the space of $\G$--invariant, Borel measures on $Z$ can be identified with the positive homology core
$$\fM^\G(Z)=C^+_\cM(n)\,.$$
\end{thm}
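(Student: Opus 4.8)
The plan is to combine Corollary~\ref{invmeas} with the identification of the positive homology core as the linear dual of the top Cech cohomology, carried out compatibly with the positivity structure. First I would recall from Corollary~\ref{invmeas} that $\fM^\G(Z)=\phom_\mZ(H^d(\cM);\mR)$, so the task reduces to showing that, under the stated hypotheses, $\phom_\mZ(H^d(\cM);\mR)$ can be canonically identified with $\cC^+_\cM(n)$ for every $n$. The natural pairing to exploit is the one between the simplicial homology groups $H_d(X_m)$ and the simplicial cohomology groups $H^d(X_m)$: since each $X_m$ is a finite complex all of whose simplices are faces of $d$-simplices, $H_d(X_m)$ and $H^d(X_m)$ are free abelian of the same finite rank and are dual to one another, with the pairing given by evaluating a $d$-cochain on a $d$-cycle. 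Under this pairing the positive cone $H^+_d(X_m)$ — which by hypothesis generates $H_d(X_m)$ — is dual to the cone of cohomology classes evaluating non-negatively on positive cycles; and the bonding maps $(f_m)_*$ on homology are adjoint to the maps $f_m^*$ on cohomology.

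Next I would assemble these dualities over the tower. By Proposition~\ref{cohMM}, $H^d(\cM)=\varinjlim H^d(X_m)$ with the direct limit taken along the $f_m^*$; dualizing a direct limit of finitely generated free abelian groups turns it into the inverse limit of the duals, so $\mathrm{Hom}_\mZ(H^d(\cM);\mR)=\varprojlim \mathrm{Hom}_\mZ(H^d(X_m);\mR)=\varprojlim (H_d(X_m)\otimes\mR)=\varprojlim V_m$, with bonding maps the $L_m$. The positivity constraint refines this: a homomorphism $H^d(\cM)\to\mR$ is positive precisely when its restriction to each $H^d(X_m)$ is non-negative on the cone dual to $H^+_d(X_m)$, which under the duality says exactly that the corresponding element of $V_m$ lies in $\cC^+_m$ (here I use that $H^+_d(X_m)$ generates, so the dual cone is full and the biduality between $\cC^+_m$ and the dual cone in cohomology is clean — a finite-dimensional cone is the set of vectors pairing non-negatively with its dual cone). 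Hence $\phom_\mZ(H^d(\cM);\mR)$ is identified with the subset of $\varprojlim V_m$ whose image in each $V_m$ lies in $\cC^+_m$.

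It then remains to identify this last subset with $\cC^+_\cM(n)=\bigcap_{m>n}L_n^m(\cC^+_m)$. This is where $\mQ$-stability enters: by Remark~\ref{invlim}, $\mQ$-stability makes each $L_m$ an isomorphism of vector spaces, so $\varprojlim V_m\cong V_n$ for every $n$, and a coherent sequence $(v_m)$ is determined by $v_n=L_n^m(v_m)$; its image in $V_m$ lying in $\cC^+_m$ for all $m\ge n$ translates, under the isomorphisms, exactly into $v_n\in\bigcap_{m>n}L_n^m(\cC^+_m)=\cC^+_\cM(n)$. Since the $L_m$ are isomorphisms the intersection is also achieved and the nesting $L_n^{m+1}(\cC^+_{m+1})\subset L_n^m(\cC^+_m)$ is all that is used. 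Putting the three identifications together gives $\fM^\G(Z)=\cC^+_\cM(n)$ for all $n$, as claimed.

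The main obstacle I anticipate is the bookkeeping at the juncture between the two presentations of $H^d(\cM)$: Corollary~\ref{invmeas} gives $H^d(\cM)$ as the $\G$-coinvariants $C(Z;\mZ)_\G$ via the Serre spectral sequence and a triangulation of $M$, while the homology core is built from the chosen simplicial presentation $\{X_n,f_n\}$ of $\cM$ — I must check that the direct system $\{H^d(X_n),f_n^*\}$ really does compute this same $H^d(\cM)$ compatibly with the positive structure, i.e. that the positive cohomology cone dual to $H^+_d(X_n)$ corresponds under the coinvariant description to the genuinely positive functionals on $C(Z;\mZ)$. This is essentially the content that the consistent, oriented presentation is compatible with the bundle projection $\pi\colon\cM\to M$ (so that $d$-simplices of $X_n$, positively oriented, pull back to positive clopen "slabs" over $M$), and it is precisely here that the fibred simplicial presentations of \cite{CHL2014} — and the hypothesis that the $H^+_d(X_n)$ generate — are doing the real work; everything after that is the formal duality and the $\mQ$-stability argument sketched above.
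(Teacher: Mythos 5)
Your proposal follows essentially the same route as the paper's proof: Corollary~\ref{invmeas}, continuity of Cech cohomology ($H^d(\cM)=\varinjlim H^d(X_n)$), dualization of the direct system into the inverse system of the $V_n$ with the cone biduality supplied by the full-rank hypothesis on $H^+_d(X_n)$, and finally $\mQ$-stability to collapse the inverse limit onto $\cC^+_\cM(n)$. You also correctly isolate the one genuinely delicate point --- that the positivity notion in $C(Z;\mZ)_\G$ must be matched with the dual-cone positivity in $\varinjlim H^d(X_n)$ via the consistent presentation --- which is exactly the step the paper treats by representing a function $\alpha\colon Z\to\mZ$ as a cocycle supported over a $d$-simplex of $M$ and pulling it back from some $X_n$.
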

\begin{proof}
We  make identifications
$$\begin{array}{rll}
\fM^\G(Z)=&\phom_\mZ\left(C(Z,\mZ)_\G;\mR\right)&(1)\\&&\\
=&\phom_\mZ\left(\displaystyle\lim_{\rightarrow}H^d(X_n);\mR\right)&(2)\\&&\\
=&\displaystyle\lim_{\leftarrow}\phom_\mR\left(\hom\left(H_d(X_n);\mR\right);\mR\right)&(3)\\&&\\
=&\displaystyle\lim_{\leftarrow}H_d^+(X_n;\mR)&(4)\\&&\\
=&C^+_\cM(n)\,.&(5)
\end{array}$$
Here, (1) is a restatement of Corollary \ref{invmeas}, and (2) uses the property of Cech cohomology, that $H^d(\cM)=H^d(\displaystyle\lim_{\leftarrow}X_n)=\displaystyle\lim_{\rightarrow}H^d(X_n)$. The identification (3) uses the observation that as we are looking at linear maps to $\mR$ we might as well consider  homology and cohomology with coefficients in $\mR$, and in that case the homology and cohomology are dual vector spaces. The lines (2) and (3) also require an understanding of positivity in the cohomology groups $H^d(X_n)$, that the maps in the direct system $\displaystyle \lim_{\rightarrow}H^d(X_n)$ preserve positivity, and that the notion of positivity in the limit $\displaystyle \lim_{\rightarrow}H^d(X_n)$ agrees with that in $C(Z;\mZ)_\G$.

By the universal coefficient theorem, the free part of the cohomology $H^d(X_n)$ is given by the dual group $\hom(H_d(X_n);\mZ)$ (we need not worry about any torsion part as eventually all passes to $\mR$ coefficients). This gives a notion of positivity in this part of the cohomology, where we say $[\alpha]\in H^d(X_n)$ is positive if, as an element of $\hom(H_d(X_n);\mZ)$ it takes non-negative values on positive homology elements, i.e.,
$$[\alpha]\in H^d(X_n)\mbox{ is positive if }\alpha(x)\geq0\mbox{ for all }x\in H^+_d(X_n)\,.$$
As $(f_n)_*\colon H_d(X_{n+1})\to H_d(X_n)$ takes positive elements to positive elements, so too does $(f_n)^*\colon H^d(X_{n})\to H^d(X_{n+1})$ preserve positivity and we can define the positive part of $H^d(\cM)$ as the direct limit of the positive parts of the $H^d(X_n)$. We must show that this coincides with the notion of positivity in $C(Z;\mZ)_\G$ under the isomorphism of Proposition \ref{cohMM}.

Let $\sigma$ be a $d$-simplex in $M$, and pick an interior point $z$. Regard $Z$ as the fibre in $\cM$ over $z$, and let $Z_n$ be the finite, discrete space, the image of $Z$ in $X_n$. Let $\alpha$ be a continuous function $Z\to\mZ$, representing the class $[\alpha]\in C(Z;\mZ)_\G=H^d(\cM)$. The perspective of Proposition \ref{cohMM} shows that $\alpha$ can be interpreted as a cocycle on $\cM$ by first mapping $Z\times \sigma\to\mZ$ using $\alpha$ on $Z$ (and constant on the $\sigma$ component), extended trivially to the rest of $\cM$. Any such cocycle is the pull back of a cocycle on $X_n$,  for some sufficiently large $n$, defined similarly using some function $\alpha_n\colon Z_n\to\mZ$. Then $\alpha$ is a positive function if and only if $\alpha_n$ is. Clearly if $\alpha_n$ is a positive function, the cohomology class $[\alpha]\in H^d(X_n)$ is positive in the sense above, that as a homomorphism on $H_d(X_n)$ it takes non-negative values on $H^+_d(X_n)$. This shows that the positives in $C(Z;\mZ)_\G$ map to the positives in $H^d(\cM)$, moreover they map injectively since this is just the restriction of the isomorphism $C(Z;\mZ)_\G\to H^d(\cM)$. However, as we assume $H^+_d(X_n)$ generates the whole of $H_d(X_n)$, it is of full rank and so the positive elements in $H^d(X_n)$ will be the dual cone, of the same rank. Thus every positive will be of the form $[\alpha_n]$, as above, and this is enough to identify the positives in $C(Z;\mZ)_\G$ with those in $H^d(\cM)$.

As the $X_n$ are compact simplicial complexes, their homology is finitely generated and so the `upside down' universal coefficient theorem applies. Thus we identify $H_d(X_n)$ with its double dual
$$H_d(X_n)=\hom(H^d(X_n);\mZ)=\hom\left(\hom(H_d(X_n);\mZ);\mZ\right)\,.$$
Then  defining  positivity in $\hom(H^d(X_n);\mZ)$ by saying $F\in \hom(H^d(X_n);\mZ)$ is positive if $F(\alpha)\geq0$ for all positive $\alpha\in H^d(X_n)$, agrees with the original notion of positivity in $H_d(X_n)$. Working now with homomorphisms to $\mR$ this gives  equation (4).

Finally, equation (5) follows from the fact that when the presentation is $\mQ$-stable, the maps $L_n$ are isomorphisms.
\end{proof}

Recall that the set of $\G$-invariant probability measures on $Z$ can be identified with the convex set in $\phom_\mZ(C(Z;\mZ)_\G;\mR)$ of functionals satisfying $\int_Z\mathbf{1}_Zd\mu=1$, and the ergodic ones can be identified with the extreme points of this set. Thus, when the conditions of the theorem are met this allows us to identify the set of invariant ergodic probability measures  directly.

\section{Applications and Examples}\label{examp}

We begin by considering an example that exploits the connection between the homology core and the structure of the invariant measures of the underlying action. This first example has some overlap with the results of Cortez and Petite \cite{CortPet}.

\begin{ex}
Solenoids and $\G$ odometers
\end{ex}
Let $M$ be a $\mathrm{PL}$ orientable $d$--dimensional manifold with fundamental group $\Gamma=\pi_1(M,m_0)$. Consider then a chain of (not necessarily normal) subgroups of finite index greater than 1:
\[
 \G=\G_1 \supset \G_2\supset \G_3 \supset \cdots \supset\G_i \supset \cdots
\]
and the associated Cantor set
\[
C = \lim_{\leftarrow}  ~ \{\, \G/\G_1 \, \leftarrow \,\G/\G_2 \,
\leftarrow \,\cdots \, \leftarrow\, \G/\G_i  \, \leftarrow\cdots
\}.
\]
There is a natural minimal action of $\G$ on  $C$ given by translation in each factor. The suspension of this action over $M$ then yields a minimal matchbox manifold $\cM  = \widetilde{M}\times_{\G}C$ which has a consistent presentation in which $X_1=M$ and $X_n=\widetilde{M}/\G_n$. This presentation can be made simplicial by taking a simplicial structure for $M$ that is then lifted to $\widetilde{M}$, which in turn pushes down to a simplicial structure for the leaves and the quotients $X_n.$

\begin{thm}\label{Solenoid}
The action of $\G$ on $C$ as above is uniquely ergodic.
\end{thm}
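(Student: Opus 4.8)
The plan is to verify the hypotheses of Theorem~\ref{invariantsandcone} for the presentation $\cM = \lim_{\leftarrow}\{X_n,f_n\}$ described in the example (with $X_1=M$, $X_n=\widetilde{M}/\G_n$), and then simply compute the positive homology core: it will turn out to be a single ray, which is exactly unique ergodicity.

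First I would pin down the spaces and bonding maps. Since $\G_n$ acts freely on $\widetilde M$ (as a subgroup of the deck group), each $X_n=\widetilde M/\G_n$ is a closed, connected, orientable $d$-manifold, and $f_n\colon X_{n+1}\to X_n$ is precisely the covering map of degree $k_n:=[\G_n:\G_{n+1}]\ge 2$ induced by the inclusion $\G_{n+1}\subset\G_n$. Hence $H_d(X_n)\cong\mZ$, generated by the fundamental class $[X_n]$ for the orientation coherently induced by the leaves (these cover $M$ orientably, and each $f_n$, being an orientation‑preserving simplicial covering, carries $[X_{n+1}]$ to $k_n[X_n]$). Tensoring up, $V_n\cong\mR$ and $L_n\colon V_{n+1}\to V_n$ is multiplication by $k_n$, an isomorphism of $\mR$–vector spaces; likewise $(f_n)_*\colon H_d(X_{n+1};\mQ)\to H_d(X_n;\mQ)$ is multiplication by $k_n$, an isomorphism of $\mQ$–vector spaces. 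So the presentation is homologically $\mQ$–stable in the sense of Definition~\ref{stable} (though certainly not $\mZ$–stable), and $H_d^+(X_n)=\mZ_{\ge 0}[X_n]$ plainly generates $H_d(X_n)$ for every $n$. The presentation is consistent because $\cM\to X_1$ is the bundle map $\pi\colon\cM\to M$ itself. Thus all hypotheses of Theorem~\ref{invariantsandcone} hold.

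Next I would compute the core. In $V_n\cong\mR$ the positive cone is $\cC^+_n=\mR_{\ge 0}$, and the composite $L^n_k=L_k\circ\cdots\circ L_{n-1}$ is multiplication by the positive integer $k_k k_{k+1}\cdots k_{n-1}$, so $L^n_k(\cC^+_n)=\mR_{\ge 0}$ for all $n>k$. Hence $\cC^+_\cM(k)=\bigcap_{n>k}L^n_k(\cC^+_n)=\mR_{\ge 0}$ for every $k$. By Theorem~\ref{invariantsandcone} the space of $\G$–invariant Borel measures on $C$ is this one–dimensional cone: every invariant measure is a nonnegative scalar multiple of one fixed measure. Intersecting with the affine slice $\{\mu:\mu(C)=1\}$ leaves a single point, namely the measure assigning mass $[\G:\G_n]^{-1}$ to each level‑$n$ cylinder; this is the unique $\G$–invariant probability measure, so the action is uniquely ergodic.

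I do not expect a serious obstacle here. The only substantive input is identifying $(f_n)_*$ on top homology with multiplication by the covering degree, which is standard, together with the observation that the explicit presentation of the example is simultaneously simplicial, oriented (coherently with the leaves) and consistent. As a sanity check, one can bypass the homology core altogether: an invariant measure on $C$ is determined by its values on cylinder sets, and because $\G$ acts transitively on each coset space $\G/\G_n$, all level‑$n$ cylinders must receive equal mass, so the measure is determined by its total mass — the same conclusion, now recognisable as the statement that the core is a single ray.
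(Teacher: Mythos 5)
Your proposal is correct and follows essentially the same route as the paper's own proof: identify $H_d(X_n)\cong\mZ$ with the bonding maps acting as multiplication by the covering degrees $[\G_n:\G_{n+1}]$, conclude $\mQ$--stability with each $V_n\cong\mR$ and core a single ray, and invoke Theorem~\ref{invariantsandcone}. You supply more detail than the paper (explicitly checking consistency, that $H_d^+(X_n)$ generates, and giving the cylinder-set sanity check), but the argument is the same.
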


\begin{proof}
In this case  $H_d(X_n)$ is isomorphic to $\mZ$  for each $n.$ The induced homology maps are multiplication by the degrees of the corresponding covering maps, which in turn are given by the indices of the subgroups. Thus, this presentation is $\mQ$--stable and each core $\cC_\cM(n)$ and each vector space $V_n$ can be identified with $\mR.$ By Theorem \ref{invariantsandcone}  the action of $\G$ is uniquely ergodic.
\end{proof}

We now begin an investigation of how to calculate the homology core for $\mQ$ and $\mZ$--stable presentations.

\begin{defn}
A sequence of matrices of constant rank $d$ with non--negative entries $(M_n)_{n\in \mN},$ is \emph{recurrent} if there are indices $k_1<\ell_1 \leq k_2<\ell_2 \leq \cdots$ and a matrix $B$ with positive entries satisfying for all $n$
\[
B=M_{k_n}^{\ell_{n}-1}
\]

\end{defn}

It is known, see e.g. \cite[pp. 91--95]{Furst}, that if  $(M_n)_{n\in \mN},$ is recurrent then
there is a $v\in \mR^d$ with positive entries satisfying
\[
\mathrm{span}\,v=\bigcap_{n\in \mN} \, M_0^n \left(\cC^d\right),
\]
where $\cC^d$ denotes the positive and negative cone in $\mR^d.$ Recurrent sequences have been important in the study of $S$--adic systems, see e.g. \cite{BST2014}.

It then follows that if the sequence of matrices $(M_n)_{n\in \mN}$ representing the linear maps as described in Theorem \ref{orbit} is recurrent, then $\cC_{\cM}(n)$ will be a single line for each $n.$ We shall see below in Example \ref{cf} that this condition is however not necessary for the core to be a single line in each place. In the  special case $(M_n)_{n\in \mN}$  is a sequence each term of which is the same positive matrix,   $\cC_{\cM}(n)$ is a single line formed by the span of the Perron--Frobenius right eigenvector.

\begin{ex}\label{sub}
Substitution tiling spaces
\end{ex}
We now provide two substitution tiling spaces of dimension one which can be easily distinguished topologically by their homology cores.

\begin{equation*}\label{sub1}
\begin{split}
   \sigma_1:\{a,b\}\to \{a,b\}^* \text{ is given by }& \begin{cases}
           a & \mapsto  a^{10}b^7 \\
           b & \mapsto a^3b^2
         \end{cases} \text{ and } \\
     \sigma_2:\{a,b\}\to \{a,b\}^* \text{ is given by } & \begin{cases}
           a & \mapsto  a^{11}b^4 \\
           b & \mapsto a^3b
         \end{cases} .
\end{split}
\end{equation*}
Each substitution $\sigma_i$ is primitive, aperiodic and is proper (see \cite{BDETDS} for the role properness plays in expansions as inverse limits). Thus, the corresponding tiling spaces $\cT_i$ (formed by suspending the associated substitution subshift on $\{a,b\}^\mZ$ over the circle) admit the following presentations \cite{BDETDS}

\begin{equation}\label{wedge}
 \begin{array}{lcl}
 \cT_i&\approx & \underleftarrow{\lim}\{X \xleftarrow{f_i}X \xleftarrow{f_i} X \xleftarrow{f_i} \cdots\}
\end{array}
\end{equation}

where $X$ is the wedge of two circles in both cases and the map $f_i$ is the natural one induced by the corresponding substitution $\sigma_i.$ This can be easily adjusted to yield an oriented simplicial presentation by introducing vertices in $X$ (progressively more as one passes down the sequence).
For each copy of $X$ in the two towers we take as a basis for $H_1(X)\approx \mZ \bigoplus \mZ$ the cycle corresponding to the $a \text{--circle}\approx (1,0)$ and  the cycle corresponding to the $b \text{--circle}\approx (0,1).$ Then each $V_n$ in the two sequences is isomorphic to $\mR^2$ with the corresponding bases. We then have the corresponding towers of the $V_n$ and $L_n.$

\begin{equation}\label{HomTower}
\mR^2 \xleftarrow{M_i} \mR^2 \xleftarrow{M_i} \mR^2 \xleftarrow{M_i} \cdots
\end{equation}

where $M_1=\left(
   \begin{array}{cc}
     10 & 7 \\
     3 & 2 \\
   \end{array}
 \right)
$ and $M_2=\left(
   \begin{array}{cc}
     11 & 4 \\
     3 & 1 \\
   \end{array}
 \right)
$ represent the corresponding linear transformations with respect to the chosen bases.
Observe then that both presentations are $\mZ$--stable, and so by Theorem \ref{orbit} the two tiling spaces are homeomorphic only if their homology cores are in the same $GL(2,\mZ)$ orbit. As the matrices are positive, by our above remarks the cores at all places are given by the span of the Perron-Frobenius right eigenvector of the corresponding matrix. Such eigenvectors are given by $v_1:=\left(\begin{array}{c}\frac{1}{7}(4+\sqrt{37})\\ 1\end{array}\right)$ for $M_1$ and $v_2:=\left(\begin{array}{c}\frac{1}{4}(5+\sqrt{37})\\ 1\end{array}\right)$ for $M_2$. As the continued fraction expansions of $\frac{1}{7}(4+\sqrt{37})$ and  $\frac{1}{4}(5+\sqrt{37})$ are not tail equivalent, the vectors $v_1$ and $v_2$ cannot be in the same $GL(2,\mZ)$ orbit \cite[Thm. 174]{HW}. Observe, that despite this the eigen\textit{value}  for both matrices is $6 + \sqrt{37},$ and so the distinction between these two spaces is not picked up by the invariants related to matrix equivalence or ordered cohomology \cite{BDFund},\cite{ORS},\cite{SW2003}.

\begin{ex}\label{shape}
Relation to shape type
\end{ex}
We shall see in Example \ref{cf} an entire class of shape equivalent spaces that the homology core can distinguish topologically, but here we supplement the pair $\sigma_1,\sigma_2$ with a third substitution that demonstrates a limitation of the homology core for the purposes of topological classification, showing that the core cannot distinguish all shape {\em inequivalent\/} spaces.

\begin{equation*}\label{L1}
\sigma_3:\{a,b\}\to \{a,b\}^* \text{ is given by } \begin{cases}
           a & \mapsto  ababaababaababaab \\
           b & \mapsto ababa
         \end{cases}
\end{equation*}

This substitution is not proper, but its square $(\sigma_3)^2$ is proper. The tiling space $\cT_3$  corresponding to $\sigma_3$ is the same as the tiling space corresponding to $(\sigma_3)^2$ in the sense that the subshifts of $\{a,b\}^\mZ$ determined by these substitutions are the same. Thus, again $\cT_3$  admits an oriented simplicial presentation as in Equation \ref{wedge}, where $X$ is again the wedge of two circles, but the map $f_3$ is  induced by the  substitution $(\sigma_3)^2.$  With respect to the bases as before, the homology tower for $\cT_3$ is as in Equation \ref{HomTower}  with $M_i$ replaced by $(M_1)^2.$ Thus, the homology core of  at all places is identical to that of $\cT_1.$ Observe that the bonding map $f_3$ yields an automorphism of $\pi_1(K)$ (with base point the point common to both cirles) whose inverse can be represented by the automorphism of the free group generated by $\{a,b\}$ given by the square of the following:

 \begin{equation*}\label{inv}
\begin{array}{ccl}
   a & \mapsto & a^{-1}b^3 a^{-1}b^4  \\
  b & \mapsto & b^{-4}ab^{-3}ab^{-3}a
\end{array}
\end{equation*}

It follows that the $L_1$ invariant (see \cite{ClarkHunton}) vanishes for $\cT_3.$ However, the $L_1$ invariant does not vanish for $\cT_1,$ as can be seen by an application of the folding lemma of Stallings. (See \cite{ClarkHunton} for similar examples.) Thus, although these spaces are not homeomorphic or even shape equivalent, the homology core does not detect this.

In general, once an appropriate presentation has been found as indicated in \cite{AP1998}, one can calculate the homology core of a substitution tiling space of higher dimension in a similar way using a single matrix.

We now see how one can apply Theorem \ref{orbit} to great advantage to topologically classify some natural classes of spaces that are not substitution tiling spaces but matchbox manifolds of dimension one.

For convenience to make indices match their usual interpretations, in the following two examples we will index inverse sequences starting with index 0.

\begin{ex} \label{cf}
Continued fractions
\end{ex}

For simplicity (as it does not affect the homology calculations) we represent $K$ as the $CW$ complex depicted in Figure~\ref{Simplex} with three one cells with the indicated orientations and two vertices.

\begin{figure}
\centering
\begin{tikzpicture}[->, node distance=4cm, auto]
\node [fill,circle,draw,inner sep = 0pt, outer sep = 0pt, minimum size=2mm] (at) at (0.000000,0.000000) {};
\node [fill,circle,draw,inner sep = 0pt, outer sep = 0pt, minimum size=2mm] (as) at (2.000,0.000000) {};
\draw (as) edge[bend right=110, looseness=3, ->, ultra thick] node {$a$}(at);
\draw (at) edge[ ->, ultra thick] node {$c$}(as);
\draw (as) edge[bend left=110, looseness=3, ->, ultra thick] node {$b$}(at);
\end{tikzpicture}
\caption{The complex $K$}\label{Simplex}
\end{figure}
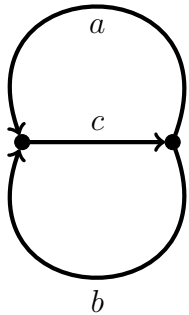

For each positive integer $n$ let $f_n:K \to K$ be the map defined by
\[
\xymatrix{
a \ar[r] &  c\\
b\ar[r]  &ca\overbrace{cb\cdots cb}^\text{$n-1$ copies}c\\
c\ar[r]&b}
\]
where $f_n$ maps each cell onto cells in the indicated order from left to right, preserving orientation.
For each sequence of positive integers $N=(n_0,n_1,...),$ we define the orientable matchbox manifold

\begin{equation}\label{cfpres}
 \begin{array}{lcl}
 X_N &:=&\underleftarrow{\lim}\;\{K \xleftarrow{f_{n_0}} K \xleftarrow{f_{n_1}} K \xleftarrow{f_{n_2}} K \xleftarrow{f_{n_3}}\cdots\}.
\end{array}
\end{equation}

For homology calculations, we use the classes $[z_1]$ and $[z_2]$ of the cycles $z_1 \sim cb$ and $z_2\sim ca$ as generators of $H_1(K,\mZ)\approx \mZ\bigoplus\mZ.$ With respect to these generators, we have the induced homomorphism on $H_1(K,\mZ)$ given by
\[
(f_n)_* \sim \left(
               \begin{array}{ll}
                 n & 1 \\
                 1 & 0 \\
               \end{array}
             \right).
\]
Notice that  $(f_n)_*$  is an isomorphism for each $n,$ and so each presentation as given in Equation \ref{cfpres} is $\mZ$--stable, and so we may apply Theorem~\ref{orbit} to the family of spaces
\[
\fX:=\{\,X_N\, | \, N \text{ is a sequence of positive integers }\}.
\]
Observe that
\[
\left(\begin{array}{ll}n_0 & 1 \\1 & 0 \\\end{array}\right)   \left(\begin{array}{ll}n_1 & 1 \\1 & 0 \\\end{array}\right)\cdots \left(\begin{array}{ll}n_k & 1 \\1 & 0 \\\end{array}\right)=\left(\begin{array}{ll}p_{k} & p_{k-1} \\q_{k} & q_{k-1} \\\end{array}\right)
\]
where $\frac{p_k}{q_k}=[n_0,n_1,n_2,\dots,n_k]$ in continued fraction notation. Observe that with $\alpha_N:=[n_0,n_1,n_2,\dots],$ we have that $\lim_{k\to \infty}\frac{p_k}{q_k}=\alpha_N.$ Now $\left(\begin{array}{ll}p_{k} & p_{k-1} \\q_{k} & q_{k-1} \\\end{array}\right)$ maps the  positive and negative cones in $V_k$ to the sectors in $V_0$ bounded by the lines spanned by  $\left(\begin{array}{r}p_k \\ q_k\end{array}\right)$ and $\left(\begin{array}{r}p_{k-1} \\ q_{k-1}\end{array}\right).$ Hence, we have that the homology core of $X_N$ at place zero is given by
\[
\mathrm{span} \left(\begin{array}{r}\alpha_N \\ 1\end{array}\right) = \bigcap_{n\in \mN} \, M_0^n \left(\cC_n \right).
\]
Hence, the corresponding $\mZ$ action on the Cantor set is uniquely ergodic by Theorem \ref{invariantsandcone}, and by Theorem \ref{orbit} the spaces $X_N$ and $X_M$ are homeomorphic only if there is a matrix in $GL(2,\mZ)$ that maps $\left(\begin{array}{r}\alpha_N \\ 1\end{array}\right)$ into $\mathrm{span}\,\left(\begin{array}{r}\alpha_M \\ 1\end{array}\right).$ By the classical theorem on the classification of numbers by their continued fraction expansions, \cite[Thm. 174]{HW}, this happens precisely when the continued fraction expansions for $\alpha_N $ and $\alpha_M$ share a common tail: there exist $k$ and $l$ such that for all positive integers $i$ we have $m_{k+i}=n_{\ell + i}.$ When this happens, the inverse sequences defining  $X_N$ and $X_M$ have equal cofinal subsequences and so are clearly homeomorphic. Thus we obtain the following classification of the spaces in $\fX,$ c.f. \cite{BW},\cite{F}.

\begin{thm}
$X_M$ and $X_N$ are homeomorphic if and only if the sequences $M$ and $N$ share a common tail.
\end{thm}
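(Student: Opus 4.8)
The plan is to prove both directions separately, with the reverse (``common tail $\Rightarrow$ homeomorphic'') direction being essentially a cofinality observation and the forward direction resting on Theorem~\ref{orbit} together with the classical continued fraction classification. First I would dispose of the easy direction: if $M=(m_0,m_1,\dots)$ and $N=(n_0,n_1,\dots)$ share a common tail, say $m_{k+i}=n_{\ell+i}$ for all $i\geq 1$, then the two inverse sequences
\[
K\xleftarrow{f_{m_0}}K\xleftarrow{f_{m_1}}K\xleftarrow{}\cdots\qquad\text{and}\qquad K\xleftarrow{f_{n_0}}K\xleftarrow{f_{n_1}}K\xleftarrow{}\cdots
\]
have cofinal subsequences (from index $k+1$ onward on the $M$ side, from index $\ell+1$ onward on the $N$ side) that are literally identical as sequences of spaces and bonding maps. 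Since an inverse limit is unchanged up to homeomorphism by passing to a cofinal subsequence, $X_M\approx X_N$ follows immediately.

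For the forward direction, suppose $h\colon X_M\to X_N$ is a homeomorphism. Since each $(f_n)_*$ acts on $H_1(K;\mZ)\cong\mZ^2$ by the matrix $\left(\begin{smallmatrix}n&1\\1&0\end{smallmatrix}\right)$, which is invertible over $\mZ$, both presentations are homologically $\mZ$--stable, and the $H_1^+(K)$ generate $H_1(K)$ since the matrices have non-negative entries and full rank. Hence Theorem~\ref{orbit} (with $\mX=\mZ$, $D=2$) applies: the homology cores $\cC_{X_M}(0)$ and $\cC_{X_N}(0)$ lie in the same $GL(2,\mZ)$--orbit in $\mR^2$. The computation already carried out in the excerpt identifies these cores: telescoping the matrices $\left(\begin{smallmatrix}n_0&1\\1&0\end{smallmatrix}\right)\cdots\left(\begin{smallmatrix}n_k&1\\1&0\end{smallmatrix}\right)=\left(\begin{smallmatrix}p_k&p_{k-1}\\q_k&q_{k-1}\end{smallmatrix}\right)$ with $p_k/q_k=[n_0,\dots,n_k]$, and noting that the image of the positive-and-negative cone shrinks down to $\mathrm{span}\left(\begin{smallmatrix}\alpha_N\\1\end{smallmatrix}\right)$ where $\alpha_N=[n_0,n_1,\dots]$, gives $\cC_{X_N}(0)=\mathrm{span}\left(\begin{smallmatrix}\alpha_N\\1\end{smallmatrix}\right)$, and similarly $\cC_{X_M}(0)=\mathrm{span}\left(\begin{smallmatrix}\alpha_M\\1\end{smallmatrix}\right)$. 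Therefore some matrix in $GL(2,\mZ)$ carries $\mathrm{span}\left(\begin{smallmatrix}\alpha_N\\1\end{smallmatrix}\right)$ to $\mathrm{span}\left(\begin{smallmatrix}\alpha_M\\1\end{smallmatrix}\right)$, which is precisely the statement that $\alpha_N$ and $\alpha_M$ are equivalent real numbers in the sense of Gauss. By \cite[Thm.~174]{HW}, two irrationals are $GL(2,\mZ)$--equivalent if and only if their continued fraction expansions eventually agree, i.e. $M$ and $N$ share a common tail.

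One detail I would want to make sure is clean is the claim that $\bigcap_{n} M_0^n(\cC_n)=\mathrm{span}\left(\begin{smallmatrix}\alpha_N\\1\end{smallmatrix}\right)$ is exactly a single line and not something larger. The key geometric fact is that $\left(\begin{smallmatrix}p_k&p_{k-1}\\q_k&q_{k-1}\end{smallmatrix}\right)$ sends the four quadrants' worth of the positive-and-negative cone $\cC_k$ to the union of two opposite closed sectors in $V_0$ bounded by the rays through $\left(\begin{smallmatrix}p_k\\q_k\end{smallmatrix}\right)$ and $\left(\begin{smallmatrix}p_{k-1}\\q_{k-1}\end{smallmatrix}\right)$; these sectors are nested and decreasing (successive continued-fraction convergents bracket $\alpha_N$ ever more tightly and $p_k/q_k,p_{k-1}/q_{k-1}\to\alpha_N$), so their intersection is the single line of slope $1/\alpha_N$. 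That nesting requires the $n_i$ to be positive integers (so the convergents genuinely alternate around the limit), which is exactly our standing hypothesis. The rest — stability, generation of $H_1$ by positive classes, applicability of Theorem~\ref{orbit} — is routine. The only place where any real work lives is the invocation of the classical continued fraction equivalence theorem, and that is cited rather than reproved, so I expect the main (mild) obstacle to be presenting the reverse direction's cofinality argument carefully, making sure the reindexing of the two towers genuinely produces identical cofinal subsequences rather than merely isomorphic ones.

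\begin{proof}
If $M$ and $N$ share a common tail, say $m_{k+i}=n_{\ell+i}$ for all positive integers $i$, then the inverse sequences defining $X_M$ and $X_N$ have cofinal subsequences (beginning at indices $k$ and $\ell$ respectively) that are equal as sequences of spaces and bonding maps; since the inverse limit is unchanged by passing to a cofinal subsequence, $X_M\approx X_N$.

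Conversely, suppose $X_M\approx X_N$. Each $(f_n)_*$ is represented by $\left(\begin{smallmatrix}n&1\\1&0\end{smallmatrix}\right)\in GL(2,\mZ)$, so both presentations in Equation~\ref{cfpres} are homologically $\mZ$--stable, the $H_1^+(K)$ are of full rank and generate $H_1(K)$, and every $V_n\cong\mR^2$. By Theorem~\ref{orbit} the cores $\cC_{X_M}(0)$ and $\cC_{X_N}(0)$, computed with respect to the bases $[z_1],[z_2]$, lie in a common $GL(2,\mZ)$--orbit. As computed above, $\left(\begin{smallmatrix}n_0&1\\1&0\end{smallmatrix}\right)\cdots\left(\begin{smallmatrix}n_k&1\\1&0\end{smallmatrix}\right)=\left(\begin{smallmatrix}p_k&p_{k-1}\\q_k&q_{k-1}\end{smallmatrix}\right)$ with $p_k/q_k=[n_0,\dots,n_k]$; this matrix carries $\cC_k$ onto the union of the two opposite closed sectors of $V_0$ bounded by the rays through $\left(\begin{smallmatrix}p_k\\q_k\end{smallmatrix}\right)$ and $\left(\begin{smallmatrix}p_{k-1}\\q_{k-1}\end{smallmatrix}\right)$. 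These sectors are nested and decreasing with intersection the line $\mathrm{span}\left(\begin{smallmatrix}\alpha_N\\1\end{smallmatrix}\right)$, where $\alpha_N=[n_0,n_1,\dots]=\lim_k p_k/q_k$; hence $\cC_{X_N}(0)=\mathrm{span}\left(\begin{smallmatrix}\alpha_N\\1\end{smallmatrix}\right)$ and likewise $\cC_{X_M}(0)=\mathrm{span}\left(\begin{smallmatrix}\alpha_M\\1\end{smallmatrix}\right)$. So some element of $GL(2,\mZ)$ maps $\left(\begin{smallmatrix}\alpha_N\\1\end{smallmatrix}\right)$ into $\mathrm{span}\left(\begin{smallmatrix}\alpha_M\\1\end{smallmatrix}\right)$; by \cite[Thm.~174]{HW} this holds precisely when the continued fraction expansions of $\alpha_N$ and $\alpha_M$ have a common tail, i.e. $M$ and $N$ share a common tail.
\end{proof}
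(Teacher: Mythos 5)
Your proof is correct and follows essentially the same route as the paper: the reverse direction via identical cofinal subsequences of the two towers, and the forward direction via $\mZ$--stability, Theorem~\ref{orbit}, the identification of the core at place zero with $\mathrm{span}\left(\begin{smallmatrix}\alpha_N\\1\end{smallmatrix}\right)$ through the nested convergent sectors, and the classical equivalence theorem \cite[Thm.~174]{HW}. The extra care you take over the nesting of the sectors and the single-line claim is a welcome elaboration of a step the paper states more tersely, but it is not a different argument.
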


Observe that \textit{all} the spaces in $\fX$ are shape equivalent to $K$ and hence to a wedge of two circles. Hence, while the Example \ref{shape} illustrates that there are spaces for which shape invariants (such as the $L_1$ invariant) can distinguish spaces that are not distinguished by their homology cores, there are also large classes of shape equivalent spaces that the homology core can distinguish.

Consider the following three periodic sequences $N_1=(\overline{12}),N_2=(\overline{1,2,3})$ and $N_3=(\overline{2,1,3}).$ Letting $g_1=f_{12},$ $g_2=f_1\circ f_2 \circ f_3$ and $g_2=f_2\circ f_1 \circ f_3$, the spaces $X_{N_i}$ are homeomorphic to the spaces  $\underleftarrow{\lim}\;\{K \xleftarrow{g_i} K \xleftarrow{g_i} K  \xleftarrow{g_i}\cdots\}$. The map induced on homology by the $g_i$ is given by the three matrices $M_1:=\left(
               \begin{array}{rr}
                 12 & 1 \\
                 1 & 0 \\
               \end{array}
             \right), M_2:=\left(\begin{array}{ll}1 & 1 \\1 & 0 \\\end{array}\right)   \left(\begin{array}{ll}2 & 1 \\1 & 0 \\\end{array}\right) \left(\begin{array}{ll}3 & 1 \\1 & 0 \\\end{array}\right)$ and
$M_3:=\left(\begin{array}{ll}2 & 1 \\1 & 0 \\\end{array}\right)   \left(\begin{array}{ll}1 & 1 \\1 & 0 \\\end{array}\right) \left(\begin{array}{ll}3 & 1 \\1 & 0 \\\end{array}\right).$ Now, each $M_i$ has the same characteristic equation and therefore the same eigenvalues $6 \pm \sqrt{37}$. The larger eigenvalue represents the expansion factor for these matrices. However, the eigenvectors for these three matrices that correspond to $6 + \sqrt{37}$ are given by   $\left(\begin{array}{r}\alpha_{N_i} \\ 1\end{array}\right),$ which are pairwise inequivalent since the continued fractions of the $\alpha_{N_i}$ do not share common tails. (It also follows that no pair of the matrices $M_i$ is conjugate in $GL(2,\mZ)$, for otherwise the corresponding pair of eigenvectors would be in the same in $GL(2,\mZ)$ orbit.) Thus, we see again that the homology core is capturing more than just the information given by the expansion factor \cite{ORS}.

Similar examples of families with matrices of larger size can be obtained using the matrices corresponding to higher dimensional versions of continued fractions, see, e.g., \cite{HK}.

\begin{ex}\label{gcf}
 Generalised continued fractions
\end{ex}

With $K$ as before, for each pair of positive integers $m,n$ let $ f_{m,n}:K \to K$ be the map defined by

\[
\begin{array}{ll}
a \rightarrow &  c\\
b\rightarrow  &  \overbrace{ca\cdots ca}^\text{$m$ copies} \overbrace{cb\cdots cb}^\text{$n-1$ copies}c\\
c\rightarrow & b
\end{array}
\]

With respect to the same generators as before, we have the induced isomorphism on $H_1(K,\mQ)$ given by

\[
(f_{m,n})_* \sim \left(
               \begin{array}{ll}
                 n & 1 \\
                 m & 0 \\
               \end{array}
             \right).
\]
Thus while our presentation of $ X(\alpha,\beta)$ is not generally $\mZ$--stable, it is $\mQ$--stable. For given sequences of positive integers $\alpha =(a_1,a_2,\dots),\beta =(b_0,b_1,\dots)$ we will then have the orientable matchbox manifold given by
$$\begin{array}{lcl}
 X(\alpha,\beta) &:=&\underleftarrow{\lim}\;\{K \xleftarrow{f_{1,b_0}} K \xleftarrow{f_{a_1,b_1}} K \xleftarrow{f_{a_2,b_2}} K \xleftarrow{f_{a_3,b_3}}\cdots\}.
\end{array}$$
For given sequences $\alpha$ and $\beta$ we then have
\[
\left(\begin{array}{ll}b_0 & 1 \\1 & 0 \\\end{array}\right)   \left(\begin{array}{ll}b_1 & 1 \\a_1 & 0 \\\end{array}\right)\cdots \left(\begin{array}{ll}b_k & 1 \\a_k & 0 \\\end{array}\right)=\left(\begin{array}{ll}A_{k} & A_{k-1} \\B_{k} & B_{k-1} \\\end{array}\right)
\]
where  $\frac{A_k}{B_k}$ is the $k$--th convergent of the generalised continued fraction
$$b_0 + \mathrm{\mathbf{K}}_{n=1}^{\infty}\frac{a_n}{b_n},$$ see, e.g., \cite{JT}.

We can rewrite our general continued fraction with an equivalent one (one with the same convergents) as follows \cite[2.3.24]{JT}

\[
  b_0+ \cfrac{a_1}{b_1
          + \cfrac{a_2}{b_2
          + \cfrac{a_3}{b_3 + \cfrac{a_4}{b_4+\cdots} } } }\approx b_0 + \cfrac{1}{b_1/a_1
          + \cfrac{1}{a_1b_2/a_2
          + \cfrac{1}{b_3a_2/a_1a_3 + \cfrac{1}{a_1a_2b_4/a_2a_4+\cdots} } } }
\]
where now we have positive rational entries. By the theorem of Van Vleck
 \cite[Thm. 4.29]{JT}, for a continued fraction with positive entries of the form  $$b_0 + \mathrm{\mathbf{K}}_{n=1}^{\infty}\frac{1}{b_n},$$
we have that if $\sum_{i=1}^\infty b_i$ converges, then the even and odd convergents converge monotonically to  different values (the larger/smaller terms decrease/increase), and if $\sum_{i=1}^\infty b_i$ diverges, then the convergents converge to a single value.

This leads to the following conclusions. In what follows, we use the notation $k_1=b_1/a_1$ and recursively $k_n=\frac{b_nb_{n-1}}{a_nk_{n-1}}$ for $n>1.$

\begin{prop}\label{gencf}
For $X(\alpha,\beta)$

\begin{enumerate}
  \item If $\sum_n^\infty k_n$ diverges, then with $\ell=\lim \frac{A_n}{B_n}$, we have the homology core at place zero given by
  \[
\mathrm{span} \left(\begin{array}{r}\ell \\ 1\end{array}\right) = \bigcap_{n\in \mN} \, M_0^n \left(\cC_n\right).
\]
  \item  If $\sum_n^\infty k_n$ converges, then with $\ell_E=\lim \frac{A_{2n}}{B_{2n}}$ and $\ell_O=\lim \frac{A_{2n-1}}{B_{2n-1}}$ we have the homology core at place zero  $\bigcap_{n\in \mN} \, M_0^n \left(\cC_n\right)$ is the union of the two sectors in $V_0$ bounded by $\mathrm{span} \left(\begin{array}{r}\ell_E \\ 1\end{array}\right)$ and $\mathrm{span} \left(\begin{array}{r}\ell_O \\ 1\end{array}\right).$
\end{enumerate}
\end{prop}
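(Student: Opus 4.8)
The plan is to compute the homology core at place zero directly as the intersection $\bigcap_{n} M_0^n(\cC_n)$, exactly as in Example~\ref{cf}, using the geometry of the convergents of the generalised continued fraction. First I would record the key observation, already implicit in the displayed matrix identity, that the product $M_0^k = \left(\begin{smallmatrix}A_k & A_{k-1}\\ B_k & B_{k-1}\end{smallmatrix}\right)$ sends the positive-and-negative cone $\cC_k \subset V_k$ onto the union of the two closed sectors in $V_0$ spanned by the pair of vectors $\left(\begin{smallmatrix}A_k\\ B_k\end{smallmatrix}\right)$ and $\left(\begin{smallmatrix}A_{k-1}\\ B_{k-1}\end{smallmatrix}\right)$; call this region $\cS_k$. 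Because $L_k(\cC_{k+1})\subset\cC_k$, the $\cS_k$ are nested, $\cS_{k+1}\subset\cS_k$, so $\cC_\cM(0)=\bigcap_k \cS_k$, and the whole problem reduces to tracking how the rays through the convergents $\left(\begin{smallmatrix}A_k\\ B_k\end{smallmatrix}\right)$ move as $k\to\infty$. The slopes of these rays are precisely the convergents $A_k/B_k$ of the generalised continued fraction $b_0 + \mathbf{K}_{n=1}^\infty a_n/b_n$.

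Next I would invoke the equivalence transformation $\left[2.3.24\right]$ of \cite{JT} cited just above the statement to replace the fraction by the equivalent one $b_0 + \mathbf{K}_{n=1}^\infty 1/k_n$ with positive rational partial denominators $k_n$ (the convergents $A_k/B_k$ are unchanged, only the matrices are rescaled, which does not affect the rays or the sectors). Van Vleck's theorem \cite[Thm.~4.29]{JT} then gives the dichotomy: if $\sum_n k_n$ diverges, the convergents $A_k/B_k$ converge to a single limit $\ell$; if $\sum_n k_n$ converges, the even convergents increase monotonically to $\ell_E$ and the odd convergents decrease monotonically to $\ell_O$ (or vice versa), with $\ell_E \neq \ell_O$. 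In case (1), the two boundary rays of $\cS_k$, namely the rays through the $k$th and $(k-1)$st convergents, both converge to the ray through $\left(\begin{smallmatrix}\ell\\ 1\end{smallmatrix}\right)$, so the nested sectors shrink to that single line; this gives $\cC_\cM(0)=\mathrm{span}\left(\begin{smallmatrix}\ell\\ 1\end{smallmatrix}\right)$, and unique ergodicity of the associated $\mZ$-action follows from Theorem~\ref{invariantsandcone}. In case (2), by monotonicity the ray through an even convergent stays on one side of $\left(\begin{smallmatrix}\ell_E\\ 1\end{smallmatrix}\right)$ and the ray through an odd convergent stays on one side of $\left(\begin{smallmatrix}\ell_O\\ 1\end{smallmatrix}\right)$; the nested intersection of the sectors $\cS_k$ is then exactly the union of the two sectors in $V_0$ bounded by $\mathrm{span}\left(\begin{smallmatrix}\ell_E\\ 1\end{smallmatrix}\right)$ and $\mathrm{span}\left(\begin{smallmatrix}\ell_O\\ 1\end{smallmatrix}\right)$.

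The one point needing genuine care — and the step I expect to be the main obstacle — is the bookkeeping of \emph{which} sector each $\cS_k$ is and verifying that the intersection really stabilises to the claimed region rather than to something smaller. Concretely, one must check that consecutive convergents $A_{k-1}/B_{k-1}$ and $A_k/B_k$ lie on opposite sides of the eventual limit(s), so that each sector $\cS_k$ genuinely straddles the target region and the monotone approach from both sides pinches down to it exactly; this is where the standard facts about the sign of $A_kB_{k-1}-A_{k-1}B_k$ (equivalently, the direction in which successive convergents alternate) for a generalised continued fraction with positive entries must be used, together with the monotonicity supplied by Van Vleck. Once that interlacing is pinned down, taking $\bigcap_k \cS_k$ is immediate in both cases. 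I would also remark that the core at any other place $k>0$ is computed identically, replacing $M_0^n$ by $M_k^n$, and $\mQ$-stability of the presentation (already noted before the statement) lets us identify this intersection with the positive homology core $\cC^+_\cM(k)$ of Definition~\ref{core-def} when we restrict to the positive cone, which is the form in which the invariant-measure interpretation of Theorem~\ref{invariantsandcone} applies.
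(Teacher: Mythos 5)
Your argument is correct and follows essentially the same route as the paper, which presents Proposition \ref{gencf} as a direct consequence of the preceding discussion: the matrix products $M_0^k$ realise the convergents of the generalised continued fraction, the equivalence transformation \cite[2.3.24]{JT} reduces to partial denominators $k_n$, and Van Vleck's theorem gives the dichotomy determining whether the nested double sectors pinch to a line or stabilise to the region between the even and odd limits. Your extra care about the interlacing of consecutive convergents (via the sign of $A_kB_{k-1}-A_{k-1}B_k$) is a reasonable filling-in of a step the paper leaves implicit.
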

Observe that two spaces $X(\alpha,\beta)$ and $X(\alpha',\beta')$ are homeomorphic only if they both correspond to the same case (1) or (2) as above. If both spaces satisfy the conditions for (1), the spaces are homeomorphic only if the  vectors $\left(\begin{array}{r}\ell \\ 1\end{array}\right)$  and $\left(\begin{array}{r}\ell' \\ 1\end{array}\right)$ are in the same $GL(2,\mQ)$ orbit and the corresponding $\mZ$ actions are uniquely ergodic as follows from Theorem \ref{invariantsandcone}. However, if both spaces satisfy the conditions for (2), the spaces are homeomorphic only if the vectors $\left(\begin{array}{r}\ell_E \\ 1\end{array}\right)$ and $\left(\begin{array}{r}\ell_O \\ 1\end{array}\right)$ are (as a pair) in the same $GL(2,\mQ)$ orbit as $\left(\begin{array}{r}\ell'_E \\ 1\end{array}\right)$ and $ \left(\begin{array}{r}\ell'_O \\ 1\end{array}\right).$  We see then that in this case the corresponding $\mZ$ actions have two invariant ergodic probability measures by Theorem \ref{invariantsandcone}.

Besides recurrence, a simple criterion that guarantees that we are in case (1) is given by $b_n>a_n$ for sufficiently large $n$, and a simple example of case (2) is given by $\alpha=(2^{2n-1})_{n\in\mZ^+},\beta=(1,1,1,\dots).$

\section*{Acknowledgments}
We thank Ian Leary for helpful comments concerning duality for manifolds, in particular for the approach using equivariant cohomology deployed in the proof of Proposition \ref{cohMM}. We thank Steve Hurder for helpful communications regarding the structure of the invariant measures of foliated spaces. We also thank the Leverhulme Trust for its generous support through the grant  IN-2013-045 that aided the collaboration that made this research possible.


\end{document}